\setlist[enumerate]{leftmargin=.5in}
\setlist[itemize]{leftmargin=.5in}
\newcommand{\Covering}{\operatorname{Cov}}
\newcommand{\NNSet}{\mathcal{NN}}
\newcommand{\Lip}{\operatorname{Lip}}
\newcommand{\ClosedBall}{\overline{B}}
\newcommand{\FirstN}[1]{\underline{#1}}
\newcommand{\J}{\boldsymbol{J}}
\newcommand{\boldN}{\boldsymbol{N}}
\newcommand{\CompressedDots}{\makebox[1em][c]{.\hfil.\hfil.}}
\theoremstyle{plain}
\numberwithin{equation}{section}
\renewcommand{\Re}{\operatorname{Re}}
\renewcommand{\Im}{\operatorname{Im}}
\newcommand{\din}{d_{\mathrm{in}}}
\newcommand{\dout}{d_{\mathrm{out}}}
\newcommand*{\CC}{\mathbb{C}}
\def\sgn{\mathop{\operatorname{sgn}}}
\newcommand{\LeftEqNo}{\let\veqno\@@leqno}
\DeclareFontFamily{U}{mathx}{\hyphenchar\font45}
\DeclareFontShape{U}{mathx}{m}{n}{
      <5> <6> <7> <8> <9> <10>
      <10.95> <12> <14.4> <17.28> <20.74> <24.88>
      mathx10
      }{}
\DeclareSymbolFont{mathx}{U}{mathx}{m}{n}
\DeclareMathAccent{\widecheck}{0}{mathx}{"71}
\DeclareMathAccent{\wideparen}{0}{mathx}{"75}
\newcommand{\Indicator}{{\mathds{1}}}
\newcommand{\Sobolev}{\mathcal{W}}
\newcommand{\CalO}{\mathcal{O}}
\newcommand{\CalA}{\mathcal{A}}
\newcommand{\CalF}{\mathcal{F}}
\newcommand{\CalW}{\mathcal{W}}
\DeclareMathOperator{\supp}{supp} %
\newcommand{\eps}{\ensuremath{\varepsilon}}
\newcommand*{\numbersys}[1]{\ensuremath{\mathbb{#1}}}
\newcommand*{\R}{\numbersys{R}}
\newcommand*{\N}{\numbersys{N}}
\newcommand{\abs}[1]{\ensuremath{\left\lvert#1\right\rvert}}
\newcommand{\z}{\boldsymbol{z}}
\newcommand{\w}{\boldsymbol{w}}
\newcommand{\x}{\boldsymbol{x}}
\newcommand{\y}{\boldsymbol{y}}
\newcommand{\approxMult}{\widetilde{\times}}
\newcommand{\m}{\boldsymbol{m}}
\newcommand{\n}{\boldsymbol{n}}
\newcommand{\inn}{\mathrm{in}}
\newcommand{\out}{\mathrm{out}}
\newcommand{\compose}{\bullet}
\let\emptyset\varnothing
\author{A.~Caragea\thanks{KU Eichstätt–Ingolstadt,
Mathematisch–Geographische Fakultät,
Ostenstraße 26,
Kollegiengebäude I Bau B,
85072 Eichstätt,
Germany
(\email{andrei.caragea@ku.de}, \email{daegwans@gmail.com}, \email{johannes.maly@ku.de}, \email{pfander@ku.de}, \mbox{\email{felix.voigtlaender@ku.de}})}
\and D.G.~Lee\footnotemark[2]
\and J.~Maly\footnotemark[2]
\and G.E.~Pfander\footnotemark[2]
\and F.~Voigtlaender\footnotemark[2] \thanks{%
Department of Mathematics,
Technical University of Munich,
85748 Garching bei München,
Germany.}
}
\title{Quantitative approximation results\\ for complex-valued neural networks
%\thanks{Submitted to the editors \today \funding{
\thanks{Version: \today \funding{
A.~Caragea acknowledges support by the DFG Grant PF 450/11-1.
D.G.~Lee acknowledges support by the DFG Grants PF 450/6-1 and PF 450/9-1.
FV acknowledges support by the German Science Foundation (DFG)
in the context of the Emmy Noether junior research group VO 2594/1--1.}}}
\newcommand{\PetersenRef}{PetersenOptimalApproximation}
\begin{document}

\maketitle

\begin{abstract}
  Until recently, applications of neural networks in machine learning have
  almost exclusively relied on real-valued networks.
  It was recently observed, however, that complex-valued neural networks (CVNNs) exhibit
  superior performance in applications in which the input is naturally complex-valued,
  such as MRI fingerprinting.
  While the mathematical theory of real-valued networks has, by now,
  reached some level of maturity, this is far from true for complex-valued networks.
  In this paper, we analyze the expressivity of complex-valued networks
  by providing explicit quantitative error bounds for approximating $C^n$ functions
  on compact subsets of $\mathbb{C}^d$ by complex-valued neural networks that employ
  the modReLU activation function, given by $\sigma(z) = \mathrm{ReLU}(|z| - 1) \, \mathrm{sgn} (z)$,
  which is one of the most popular complex activation functions used in practice.
  We show that the derived approximation rates are optimal (up to log factors)
  in the class of modReLU networks with weights of moderate growth.
\end{abstract}

\begin{keywords}
  Deep neural networks,
  Complex-valued neural networks,
  function approximation,
  modReLU activation function
\end{keywords}

\begin{AMS}%[2020]
68T07, %Artificial neural networks and deep learning
41A25, %Rate of convergence, degree of approximation
41A46. %Approximation by arbitrary nonlinear expressions; widths and entropy
\end{AMS}

\section{Introduction}
\label{sec:Intro}

%%%%%%%%%%%%%%%%%%%%%%%%%%%%%%%%%%%%%%%%%%%%%%%%%%%%%%%%%%%%
%\input{Introduction.tex}		%!TEX root=./Draft.tex

Motivated by the remarkable practical success of machine learning algorithms based on
deep neural networks (collectively called \emph{deep learning} \cite{LeCunDeepLearningNature})
in applications like image recognition \cite{KrizhevskyImagenet} and machine translation
\cite{SutskeverMachineTranslation}, the expressive power of such neural networks
is the topic of an active and rich area of study
\cite{YarotskyReLUBounds,YarotskyPhaseDiagram,PetersenOptimalApproximation,LuReLUFiniteDepthUniform}.
Results on the expressivity of real-valued neural networks date back to the 90s,
when the main focus was on networks with \emph{smooth} activation functions
\cite{MhaskarSmoothActivationOptimalRate}.
More recently, emphasis has shifted towards networks
using the \emph{rectified linear unit (ReLU)} activation function $\varrho(x) = \max \{ 0, x \}$,
as those networks have been observed to yield similar expressive power at a
greatly reduced training time cost \cite{LeCunDeepLearningNature,GlorotRectifierNetworks}.

Due to the missing support for complex arithmetic in the leading deep learning software libraries
\cite{TrabelsiDeepComplexNetworks}, practical applications of deep neural networks
have almost exclusively employed \emph{real-valued} neural networks.
Recently, however, there has been an increased interest in \emph{complex-valued neural networks (CVNNs)}
for problems in which the input is naturally complex-valued
and in which a faithful treatment of phase information is important
\cite{LustigComplexNNForMRI,TrabelsiDeepComplexNetworks}.
For instance, for the problem of MRI fingerprinting, CVNNs significantly outperform
their real-valued counterparts \cite{LustigComplexNNForMRI}.
Moreover, CVNNs have demonstrated greatly improved stability and convergence
properties for the setting of recurrent neural networks
\cite{WolterComplexGatedRNNs,BengioUnitaryEvolutionRNN}.

Motivated by the increased interest in complex-valued neural networks,
we herein initiate the analysis of their expressive power,
quantified by their ability to approximate functions of a given regularity.
Specifically, we analyze how well CVNNs with the modReLU activation function
(defined in \Cref{sub:CVNNs}) can approximate functions of Sobolev regularity $\Sobolev^{n,\infty}$
on compact subsets of $\CC^d$ (see \Cref{sub:SmoothnessAssumptions}).
The explicit result is given in \Cref{sub:MainResult}.

\subsection{Complex-valued neural networks and the modReLU function}%
\label{sub:CVNNs}

In a complex-valued neural network (CVNN), each neuron computes a function of the form
$\z \mapsto \sigma(\w^T \z + b)$ with $\z, \w \in \CC^N$ and $b \in \CC$, where
$\sigma : \CC \to \CC$ is a complex activation function.

Formally, a \emph{complex-valued neural network (CVNN)} is a tuple
$\Phi = \big( (A_1,b_1),\dots,(A_L,b_L) \big)$, where $L =: L(\Phi) \in \N$ denotes
the \emph{depth} of the network and where $A_\ell \in \CC^{N_\ell \times N_{\ell-1}}$
and $b_\ell \in \CC^{N_\ell}$ for $\ell \in \{ 1,\dots,L \}$.
Then $\din (\Phi) := N_0$ and $\dout (\Phi) := N_L$ denote the input- and output-dimension of $\Phi$.
Given any function $\sigma : \CC \to \CC$, the \emph{network function} associated to the network
$\Phi$ (also called the \emph{realization} of $\Phi$) is the function
\[
  R_\sigma \Phi
  := T_L \circ (\sigma \circ T_{L-1}) \circ \cdots \circ (\sigma \circ T_1)
  : \CC^{\din(\Phi)} \to \CC^{\dout(\Phi)}
  \quad \text{ where } \quad
  T_\ell \, \z = A_\ell \, \z + b_\ell ,
\]
and where $\sigma$ acts componentwise on vectors, meaning
$\sigma\bigl( (z_1,\dots,z_k) \bigr) = \big( \sigma(z_1),\dots,\sigma(z_k) \big)$.
The functions $T_L$ and $\sigma\circ T_\ell$ for $\ell \in \{1,\dots,L-1\}$
are the functions computed by the different \emph{layers} of the network $\Phi$.
The network $\Phi$ is called \emph{shallow} if $L=2$, i.e., if $\Phi$ has only one
``internal layer'' (neither an input, nor an output layer),
which is usually called a \emph{hidden layer}.

The \emph{number of neurons} $N(\Phi)$ of $\Phi$ is $N(\Phi) := \sum_{\ell=0}^L N_\ell$,
the \emph{width} (or \emph{breadth}) of $\Phi$ is $B(\Phi) := \max_{0 \leq \ell \leq L} N_\ell$,
and the \emph{number of weights} of $\Phi$ is
$W(\Phi) := \sum_{j=1}^L (\| A_j \|_{\ell^0} + \| b_j \|_{\ell^0})$,
where $\| A \|_{\ell^0}$ denotes the number of nonzero entries of a matrix or vector $A$.
Moreoever, writing $\| A \|_\infty := \max_{i,j} |A_{i,j}|$ for a matrix (or vector) $A$,
we define the norm of the network $\Phi$ as
$\| \Phi \| := \max_{1 \leq \ell \leq L} \max \{ \| A_\ell \|_\infty, \| b_\ell \|_\infty \}$.
We then say that \emph{the weights of $\Phi$ are bounded by $C \geq 0$} if
$\| \Phi \| \leq C$.

Finally, we will also use the notion of a \emph{network architecture}%
\footnote{The term ``network architecture'' as used here does \emph{not} refer
to conceptual network architectures like feed-forward networks,
recursive neural networks, and others.
Instead, since we are \emph{only} concerned with fully connected feed-forward networks,
the ``network architecture'' only prescribes the network shape in terms of
the number of layers, the number of neurons per layer, and which weights of the network
may be non-zero.
This terminology is widespread in the literature studying the approximation properties
of neural networks; see e.g.\ \cite{YarotskyReLUBounds,PetersenOptimalApproximation}.}.
Formally, this is a tuple ${\CalA = \big( (N_0,\dots,N_L), (I_1,\dots,I_L), (J_1,\dots,J_L) \big)}$
where $(N_0,\dots,N_L)$ determines the depth $L$ of the network and the number of neurons $N_\ell$
in each layer.
The sets ${J_\ell \subset \{ 1,\dots,N_\ell \}}$
and ${I_\ell \subset \{ 1,\dots,N_\ell \} \times \{ 1,\dots,N_{\ell-1} \}}$
determine which weights of the network are permitted to be nonzero.
Thus, \emph{a network $\Phi$ is of architecture $\CalA$} as above if
${\Phi = \big( (A_1,b_1),\dots,(A_L,b_L) \big)}$ where $A_\ell \in \CC^{N_\ell \times N_{\ell-1}}$
and $b_\ell \in \CC^{N_\ell}$, and if furthermore $(A_\ell)_{j,k} = 0$ if
$(j,k) \notin I_\ell$ and $(b_\ell)_j = 0$ if $j \notin J_\ell$.
The number of weights and neurons of an architecture $\CalA$ are defined as
$W(\CalA) := \sum_{\ell=1}^L (|J_\ell| + |I_\ell|)$ and $N(\CalA) := \sum_{\ell=0}^L N_\ell$,
respectively.

\medskip{}

In the present paper, we focus on neural networks
using the \emph{modReLU} activation function
\begin{equation}
  \sigma : \quad
  \CC \to \CC, \qquad
  z \mapsto \varrho(|z| - 1) \, \sgn(z)
            = \begin{cases}
                0,                 & \text{if } |z| \leq 1, \\
                z - \frac{z}{|z|}, & \text{if } |z| \geq 1
              \end{cases}
  \label{eq:ModReLUDefinition}
\end{equation}
proposed in \cite{BengioUnitaryEvolutionRNN} as a generalization
of the ReLU activation function ${\varrho : \R \to \R, x \mapsto \max \{ 0, x \}}$
to the complex domain.
Note that the complex sign function is defined as $\sgn(z) = \frac{z}{|z|}$
for $z \neq 0$, and $\sgn(z) = 0$ else.
We briefly discuss other activation functions in \Cref{sub:RelatedWork}.

\subsection{Smoothness assumptions}
\label{sub:SmoothnessAssumptions}

We are interested in approximating functions $f : \CC^d \to \CC$
that belong to the Sobolev space $\Sobolev^{n,\infty}$, with differentiability
understood in the sense of real variables.
Specifically, let
\[
  Q_{\CC^d}
  := \big\{
       \z = (z_1,\dots,z_d) \in \CC^d
       :
       \Re{z_k}, \Im{z_k} \in [0,1] \text{ for all } 1 \leq k \leq d
     \big\}
\]
be the (real) unit cube in $\CC^d$.
As in the definition of $Q_{\CC^d}$, we will use throughout the paper boldface characters
to denote real and complex vectors.

Identifying $\z = (z_1,\dots,z_d) \in \CC^d$
with $\x = \bigl(\Re(z_1),\dots,\Re(z_d),\Im(z_1),\dots,\Im(z_d)\bigr)\in\R^{2d}$,
we will consider $\CC^d\cong \R^{2d}$ as usual.
With this in mind, a complex function $g:\CC^d\to\CC$ can be identified
with a pair of functions $g_{\Re},g_{\Im}:\R^{2d}\to\R$
given by $g_{\Re} = \Re(g)$ and $g_{\Im} = \Im(g)$.

Given a real function $f : [0,1]^{2d} \to \R$ and $n \in \N$,
we write $f \in \Sobolev^{n,\infty}([0,1]^{2 d}; \R)$
if $f$ is $n-1$ times continuously differentiable
with all derivatives of order $n-1$ being Lipschitz continuous.
We then define
\[
  \| f \|_{\Sobolev^{n,\infty}}
  := \max
     \Big\{
       \max_{|\alpha| \leq n - 1}
         \| \partial^\alpha f \|_{L^\infty} ,
       \max_{|\alpha| = n-1}
         \mathrm{Lip}(\partial^\alpha f)
     \Big\} .
\]
Using this norm, we define the unit ball in the Sobolev space $\Sobolev^{n,\infty}$ as
\[
  F_{n,d}
  := \big\{
       f \in \Sobolev^{n,\infty}([0,1]^{2d};\R)
       \quad:\quad
       \|f\|_{\Sobolev^{n,\infty}}\leq 1
     \big\}
\]
and define the set of functions that we seek to approximate by
\[
  \mathcal{F}_{n,d}
  := \big\{
       g : Q_{\CC^d} \to \CC
       \quad:\quad
       g_{\Re},g_{\Im} \in F_{n,d}
     \big\}.
\]

\subsection{Main result}%
\label{sub:MainResult}

Our main result provides explicit error bounds for approximating
functions $g \in \CalF_{n,d}$ using modReLU networks.
This result can be seen as a generalization to the complex domain
of the approximation bounds for ReLU networks developed in \cite{YarotskyReLUBounds}.

\begin{theorem}\label{thm:IntroductionMainTheorem}
  For any $d,n \in \N$, there exists $C = C(d,n) > 0$ with the following property:

  Given any $\eps\in(0,1)$ there exists a  \emph{modReLU}-network architecture $\CalA$
  with no more than $C \cdot \ln(2/\eps)$ layers and no more than
  $C \cdot \eps^{-2d/n} \cdot \ln^2(2/\eps)$ weights such that for any
  $g \in \CalF_{n,d}$ there exists a network $\Phi$ of architecture $\CalA$
  with all weights bounded by $C \cdot \eps^{-44 d}$ and such that
  $|g(\z) - R_\sigma \Phi (\z)| \leq \eps$ for all $\z \in Q_{\CC^d}$.
\end{theorem}

The exponent $-\frac{2 d}{n}$ in place of $- \frac{d}{n}$ in the real setting
is a consequence of the identification $\CC^d \cong \R^{2 d}$.
More precisely, making the identifications $\CC^d \cong \R^{2 d}$ and $\CC \cong \R^2$
and using (real-valued) ReLU networks (with two output channels), the results in
\cite{YarotskyReLUBounds} show that---up to logarithmic factors---ReLU networks achieve
\emph{the same} approximation bounds as those shown in \Cref{thm:IntroductionMainTheorem} for modReLU-CVNNs.
Thus, as far as the asymptotic approximation rate is concerned, modReLU-CVNNs
do not \emph{strictly improve} on the approximation capabilities of ReLU networks,
but they can \emph{match} their approximation power.
This is an important theoretical finding, since even though CVNNs were found to have advantages
in several applications \cite{LustigComplexNNForMRI,BengioUnitaryEvolutionRNN},
up to now, no quantitative approximation results for CVNNs were known whatsoever%
---only universal approximation type results were available
\cite{ArenaApproximationCapabilityOfComplexNeuralNetworks,VoigtlaenderCVNNUniversality}.
Our results show that, at least for the approximation problem considered here,
there is no additional ``cost'' in using CVNNs, compared to ReLU networks.

Since \Cref{thm:IntroductionMainTheorem} only provides asymptotic rates
(i.e., no explicit bound on the constant $C$ is provided) and since the $C^n$ assumption
regarding the function to be approximated or learned
cannot be verified in practical applications, the theorem is of limited use
for guiding deep learning practitioners.
Rather, it is intended as a first step towards mathematically understanding
the expressivity of CVNNs and is furthermore expected to be informative for
other theoretical works, for instance for analyzing the performance of CVNNs
for approximating the solutions of PDEs, similar to the results in
\cite{GrohsDNNHighDimensionalElliptic,GrohsDNNHamiltonJacobiBellman,Gonon2019uniform}.

\begin{remark}
  Note that the architecture and therefore the size of the network $\Phi$ is independent
  of the function $g$ to approximate, once we fix an approximation accuracy $\eps$
  and the parameters $n$ and $d$.
  Only the choice of weights depends on $g$.
\end{remark}

\subsection{Comparison to existing work}%
\label{sub:RelatedWork}

\paragraph{Approximation results for CVNNs}

While the approximation properties of real-valued neural networks are comparatively well understood
by now, the corresponding questions for complex-valued networks remain mostly open.
In fact, even the property of universality---well studied for real-valued networks
\cite{HornikUniversalApproximation,HornikStinchcombeWhiteUniversal,
CybenkoUniversalApproximation,PinkusUniversalApproximation}---%
was only settled for very specific activation functions
\cite{ArenaNNInMultidimensionalDomains,ArenaMLPToApproximateComplexValuedFunctions,
ArenaApproximationCapabilityOfComplexNeuralNetworks,HiroseCliffordBook},
until the recent paper \cite{VoigtlaenderCVNNUniversality} resolved the question.
This universal approximation theorem for CVNNs highlights that the properties of complex-valued
networks are significantly more subtle than those of their real-valued counterparts:
Real-valued networks (either shallow or deep) are universal if and only if
the activation function is not a polynomial \cite{PinkusUniversalApproximation}.
In contrast, shallow \emph{complex-valued} networks are universal if and only if
the real part or the imaginary part of the activation function $\sigma$ is not \emph{polyharmonic},
while \emph{deep} complex-valued networks (with more than one hidden layer)
are universal if and only if $\sigma$ is neither holomorphic, nor antiholomorphic,
nor a polynomial (in $z$ and $\overline{z}$).
For instance, \emph{deep} networks with the activation function $\sigma(z) = \overline{z} \cdot e^z$
are universal, but \emph{shallow} networks with this activation function are not.

Aside of these purely qualitative universality results,
\emph{no quantitative approximation bounds for complex-valued networks are known whatsoever}.
The present paper is thus the first to provide such bounds.

\smallskip{}
\paragraph{Role of the activation function}%

As empirically observed in \cite{BengioUnitaryEvolutionRNN,LustigComplexNNForMRI},
the main advantage of complex-valued networks over their real-valued counterparts
stems from the fact that the set of implementable \emph{complex} activation functions
is much richer than in the real-valued case.
In fact, each real-valued activation function $\rho : \R \to \R$
can be lifted to the complex function $\sigma(z) := \rho (\Re z)$;
then, ${\sigma(\w^T \z + b) = \rho(\boldsymbol{\alpha}^T \x - \boldsymbol{\beta}^T \y + \Re b)}$
for $\z = \x + i \y$ and ${\w = \boldsymbol{\alpha} + i \boldsymbol{\beta}}$.
Thus, identifying $\CC^d \cong \R^{2 d}$, every real-valued network
can be written as a complex-valued one.
Therefore, one can in principle transfer every approximation result involving real-valued networks
to a corresponding complex-valued result.
Similar arguments apply to activation functions of the form
${\sigma(z) \!=\! \rho(\Re z) \!+\! i \rho(\Im z)}$.

However, using such ``intrinsically real-valued'' activation functions forfeits
the main benefits of using complex-valued networks,
namely increased expressivity and a faithful handling of phase and magnitude information.
Therefore, the two most prominent complex-valued activation functions appearing in the literature
(see \cite{BengioUnitaryEvolutionRNN,TrabelsiDeepComplexNetworks,LustigComplexNNForMRI})
are the \emph{modReLU} (see \Cref{eq:ModReLUDefinition}) and the \emph{complex cardioid}
(given by $\sigma(z) = \frac{z}{2} \cdot \bigl(1 + \frac{\Re z}{|z|}\bigr)$),
neither of which is of the form $\rho(\Re(z))$ for a real activation function $\rho$.

In the present work, we focus on the modReLU activation function because it satisfies the natural
\emph{phase homogeneity property} $\sigma(e^{i \theta} z) = e^{i \theta} \sigma(z)$.
Investigating the complex cardioid---and other complex-valued activation functions---%
is an interesting topic for future work.

\smallskip{}
\paragraph{Role of the network depth}%

Deep networks greatly outperform their shallow counterparts
in applications \cite{LeCunDeepLearningNature};
therefore, much research has been devoted to rigorously quantify the influence
of the network depth on the expressivity of (real-valued) neural networks.
The precise findings depend on the activation function:
While for \emph{smooth} activation functions, already \emph{shallow} networks
with $\CalO(\eps^{-d/n})$ weights and neurons can uniformly approximate
functions ${f \in C^n([0,1]^d)}$ up to error $\eps$ (see \cite{MhaskarSmoothActivationOptimalRate}),
this is not true for ReLU networks.
To achieve the same approximation rate, ReLU networks need at least $\CalO(1 + \frac{n}{d})$
layers \cite{PetersenOptimalApproximation,SafranShamirICMLPaper,SafranShamirArxivPaper}.
The proofs of these bounds crucially use that the ReLU is piecewise linear.
Since this is not true of the modReLU, these arguments do not apply here.

Regarding sufficiency, the best known approximation result for ReLU networks
\cite{YarotskyReLUBounds} shows---similar to our main theorem---that ReLU networks with
depth $\CalO(\ln(2/\eps))$ and $\CalO(\eps^{-d/n} \ln(2/\eps))$ weights
can approximate functions $f \in C^n([0,1]^d)$ uniformly up to error $\eps$.
For networks with bounded depth, similar results are only known for approximation in $L^p$
\cite{PetersenOptimalApproximation} or for approximation in terms of the network \emph{width}
instead of the number of nonzero weights \cite{LuReLUFiniteDepthUniform}.
It is an interesting question whether these two results extend to modReLU networks as well.

Finally, we mention an intriguing result in \cite{YarotskyPhaseDiagram} which shows that
\emph{extremely deep} ReLU networks (for which the number of layers
is proportional to the number of weights)
with \emph{extremely complicated weights} (meaning the number of significant digits
per weight grows unboundedly as $\eps \downarrow 0$)
can approximate functions $f \in C^n([0,1]^d)$
up to error $\eps$ using only $\CalO(\eps^{-d/(2n)})$ weights (up to log factors).
Due to the prohibitive complexity of the network weights
this bound has limited practical significance,
but is an extremely surprising and insightful mathematical result.
We expect that the arguments in \cite{YarotskyPhaseDiagram} can be extended
to modReLU networks, but leave this as future work.

\smallskip{}
\paragraph{Optimality}%

For modReLU networks with polynomial growth of the individual weights and logarithmic growth
of the depth (as in \Cref{thm:IntroductionMainTheorem}), the approximation rate of
\Cref{thm:IntroductionMainTheorem} is essentially optimal.
We prove this in detail in \Cref{sec:Optimality}, \Cref{thm:Optimality}.
Our proof relies on \emph{entropy arguments}, which are closely related to the
proof techniques based on rate distortion theory as used in
\cite{BoelcskeiSparseNN,PetersenOptimalApproximation}.
Furthermore, for deriving suitable covering bounds for certain network sets
(which then give rise to entropy bounds), we borrow several proof ideas from
\cite{BlackScholesGeneralizationError}.

For ReLU networks, a similar optimality result holds for networks with logarithmic growth
of the depth \emph{even without assumptions on the magnitude of the network weights}
\cite{YarotskyReLUBounds}.
The proof relies on sharp bounds for the VC dimension of ReLU networks \cite{BartlettVCBounds}.
For modReLU networks, a similar question is more subtle,
since to the best of our knowledge no analogous VC dimension bounds are available.
We thus leave it as future work to study  optimality \emph{without} assumptions on the
magnitude of the network weights.

\subsection{Structure of the paper}%
\label{sub:PaperStructure}

Inspired by \cite{YarotskyReLUBounds}, our proof of \Cref{thm:IntroductionMainTheorem} proceeds
by locally approximating $g$ using Taylor polynomials, and then showing that these Taylor polynomials
and a suitable partition of unity can be well approximated by modReLU networks.
To prove this, we first show in \Cref{sec:RealImaginaryPart} that modReLU networks
\emph{of constant size} can approximate the functions $z \mapsto \Re z$ and $z \mapsto \Im z$
arbitrarily well---only the magnitude of the individual weights of the network grows
as the approximation accuracy improves.
Then, based on proof techniques in \cite{YarotskyReLUBounds}, we show in
\Cref{sec:SquareStuff} that modReLU networks with $\CalO(\ln^2(2/\eps))$
weights and $\CalO(\ln(2/\eps))$ layers can approximate the function $z \mapsto (\Re z)^2$
up to error $\eps$.
By a polarization argument, this also allows to approximate the product function
$(z,w) \mapsto z w$; see \Cref{sec:ProductStuff}.
After describing in \Cref{sec:PartitionOfUnity} how a partition of unity
can be implemented with modReLU networks,
we combine all the ingredients in \Cref{sec:MainResult} to prove \Cref{thm:IntroductionMainTheorem}.
Finally, \Cref{sec:Optimality} proves that \Cref{thm:IntroductionMainTheorem} is essentially
optimal.

\section{Approximating real and imaginary parts}%
\label{sec:RealImaginaryPart}

%%%%%%%%%%%%%%%%%%%%%%%%%%%%%%%%%%%%%%%%%%%%%%%%%%%%%%%%%%%%
%\input{RealImaginary.tex}		%!TEX root=./Draft.tex

This section shows that modReLU networks \emph{of constant size}
can approximate the functions $z \mapsto \Re z$ and $z \mapsto \Im z$
arbitrarily well:

\begin{proposition} \label{prop:RealImaginary}
	For any $R \geq 1$ and $\eps \in (0,1)$, there exist
  functions $\Re_{R,\eps}, \Im_{R,\eps} : \CC \to \CC$
  that are implemented by \emph{shallow} $\sigma$-networks
	with $5$ neurons and $10$ weights, all bounded in absolute value
	by $C \cdot R^3 / \eps^3$ with an absolute constant $C > 0$, satisfying
  \[
    |\Re_{R,\eps}(z) - \Re(z)| \leq \eps
    \quad \text{ and } \quad
    |\Im_{R,\eps}(z) - \Im(z)| \leq \eps
    \qquad \text{ for all } \, z \in \CC \text{ with } |z| \leq R.
  \]
\end{proposition}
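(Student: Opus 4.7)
The plan is to exploit the Taylor expansion of the ``phase'' term $(1+t)/|1+t|$ arising in the modReLU formula and combine three shifted neurons so as to cancel every leading-order contribution except $\Re(z)$.

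First, for any $|z| \leq R$, any real bias $a \geq 2$, and any parameter $A \geq R$, we have $|z/A + a| \geq a - R/A \geq 1$, so the defining formula gives $\sigma(z/A + a) = (z/A + a)\bigl(1 - 1/|z/A + a|\bigr)$. Setting $t := z/(aA)$, a direct expansion of $(1+2\Re(t)+|t|^2)^{-1/2}$ via the binomial series on $|t|\leq 1/2$ yields
\[
  \frac{1+t}{|1+t|}
  \ = \ 1 + i\,\Im(t) - \tfrac{1}{2}\,\Im(t)^2 - i\,\Re(t)\,\Im(t) + O\bigl(|t|^3\bigr),
\]
with an absolute constant in the remainder. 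Substituting and multiplying by $A$ gives, for $z = x + iy$,
\[
  A\,\sigma(z/A + a)
  \ = \ z + A(a-1) - \tfrac{iy}{a} + \tfrac{y^2/2 + ixy}{a^2 A} + O\bigl(R^3/A^2\bigr).
\]

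Second, I combine three such terms with shifts $a_1 = 2,\ a_2 = 3,\ a_3 = 4$ and real coefficients $\alpha_1,\alpha_2,\alpha_3$ chosen to solve
\[
  \sum_{j=1}^3 \alpha_j = 1, \qquad \sum_{j=1}^3 \tfrac{\alpha_j}{a_j} = 1, \qquad \sum_{j=1}^3 \tfrac{\alpha_j}{a_j^2} = 0 .
\]
This Vandermonde-type system has the unique and bounded solution $(\alpha_1,\alpha_2,\alpha_3) = (-12,\,45,\,-32)$. The first equation makes the coefficient of $z$ equal to $1$; the second makes the sum of the $-iy/a_j$ pieces equal to $-iy$, which cancels the imaginary part of $z$ and leaves $\Re(z) = x$; the third annihilates the leading $1/A$ correction. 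Define
\[
  \Re_{R,\eps}(z) \ := \ \sum_{j=1}^3 (A\alpha_j)\,\sigma\bigl(z/A + a_j\bigr) \ - \ A\sum_{j=1}^3 \alpha_j(a_j-1),
\]
a $1{-}3{-}1$ shallow network with exactly $10$ complex weights. The estimate above yields $|\Re_{R,\eps}(z) - \Re(z)| \leq C_0\,R^3/A^2$ uniformly on $\{|z|\leq R\}$.

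Choosing $A = c\,R^{3/2}/\sqrt{\eps}$ with $c$ sufficiently large forces the error to be at most $\eps$. The first-layer weights ($1/A$ and the $a_j$) are bounded by an absolute constant, and the second-layer weights ($A\alpha_j$ and the scalar bias) are bounded by $O(A) = O\bigl(R^{3/2}/\sqrt{\eps}\bigr)$, which is in turn at most $C\,R^3/\eps^3$ since $R \geq 1$ and $\eps \in (0,1)$. The imaginary-part network is obtained from the identity $\Im(z) = \Re(-iz)$: one simply replaces the input multiplier $1/A$ by $-i/A$, whose magnitude is unchanged, so the same architecture and weight bound apply. The main technical step is the third-order Taylor expansion of $(1+t)/|1+t|$ with an explicit, $A$-independent remainder constant; all remaining steps are routine bookkeeping.
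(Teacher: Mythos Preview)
Your proof is correct and takes a genuinely different route from the paper's. The paper does not Taylor-expand $\sigma$; instead it writes $\sgn(w)=w-\sigma(w)$, uses an exact two-neuron identity network $\mathrm{Id}_{R'}(w)=\sigma(2w)-\sigma(w)$ (valid for $|w|\geq 1$), and then proves a separate lemma showing that $\Re_h(z):=h^{-2}\bigl(\sgn(hz-\tfrac{i}{h})+i\bigr)$ approximates $\Re(z)$ with error $\leq 2h|z|$. Combining these gives three neurons, and the choice $h=\eps/(2+2R)$ forces the weight bound $O(R^3/\eps^3)$ to be essentially tight for that construction.

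Your approach instead cancels the unwanted terms in $A\,\sigma(z/A+a)$ order by order using three real shifts and a Vandermonde system. This is arguably more transparent (no auxiliary lemma, no complex biases), and it actually yields a sharper weight bound $O(R^{3/2}/\eps^{1/2})$, which you then relax to the stated $O(R^3/\eps^3)$. One small imprecision: invoking the \emph{binomial series} for $(1+s)^{-1/2}$ with $s=2\Re(t)+|t|^2$ on the full disk $|t|\leq\tfrac12$ is not quite legitimate, since $|s|$ can reach $\tfrac54$ there; but the claim you need---that $(1+t)/|1+t|$ agrees with its degree-$2$ Taylor polynomial up to $O(|t|^3)$ with an absolute constant---follows directly from Taylor's theorem, as the map is $C^\infty$ on $\{|t|\leq\tfrac12\}$ (where $|1+t|\geq\tfrac12$). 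Alternatively, your choice of $A$ makes $|t|\leq R/(2A)$ as small as you like, so restricting to $|t|\leq\tfrac13$ already puts $|s|<1$.
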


To prove \Cref{prop:RealImaginary}, we need two ingredients:
First, modReLU networks can implement the identity function
on bounded subsets of $\CC$ \emph{exactly}.
To be precise, for arbitrary $R > 0$ it holds that $\mathrm{Id}_R(z)=z$ for $z \in \CC$
with $|z| \leq R$, where
\begin{equation}
	\mathrm{Id}_R (z)
	:= \sigma \big( 2 z + 2 R + 2 \big) - \sigma(z + R + 1) - (R + 1).
	\label{eq:IdentityRepresentation}
\end{equation}

Indeed, for $w \in \CC$ with $|w| \geq 1$, we have
$\sigma(2 w) - \sigma(w) = 2 w - \frac{2 w}{|2 w|} - (w - \frac{w}{|w|}) = w$.
For $z \in \CC$ with $|z| \leq R$, setting $w = z + R + 1$ so that $|w| \geq 1$
gives $\mathrm{Id}_R(z)=z$.

As the second ingredient, we use the following functions, parameterized by $h > 0$:
\begin{align*}
  \Im_h (z) := \frac{-i}{h^2} \cdot \Bigl(\sgn\bigl(h z + \tfrac{1}{h}\bigr) - 1\Bigr)
  \quad \text{and} \quad
  \Re_h (z) := \frac{1}{h^2} \cdot \Bigl(\sgn\bigl(h z - \tfrac{i}{h}\bigr) + i\Bigr),
  \quad z \in \CC .
\end{align*}

The next lemma shows that these complex-valued functions
well approximate the real-valued functions $\Re$ and $\Im$.
The proof of \Cref{prop:RealImaginary} will then consist of showing that
$\Im_h$ and $\Re_h$ can be implemented by modReLU networks.

\begin{lemma}\label{lem:RealImaginary}
For $z \in \CC$ and $0 < h \leq \frac{1}{2 + 2 |z|}$, we have
  \begin{align}
    \Big|
        \Im (z) - \Im_h(z)
      \Big|
      \leq 2 h |z|
    \qquad \text{and} \qquad
    \Big|
        \Re (z) - \Re_h(z)
      \Big|
      \leq 2 h |z| .
      \label{eq:RealImagApprox}
  \end{align}
\end{lemma}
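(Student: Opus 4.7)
The plan is to reduce both bounds to a single computation, explicitly simplify $\Im_h(z)$ via rationalization so that the error becomes manifestly $O(h^2|z|^2)$, and then use the hypothesis on $h$ to upgrade this to the claimed bound $2h|z|$.

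First I would reduce the $\Re$ case to the $\Im$ case via a symmetry. Since $h > 0$, one has
\[
 hz + \tfrac{1}{h} = \tfrac{1}{h}(1 + h^2 z)
 \quad\text{and}\quad
 hz - \tfrac{i}{h} = \tfrac{-i}{h}(1 + i h^2 z),
\]
so multiplicativity of the complex signum yields $\sgn(hz + 1/h) = \sgn(1 + h^2 z)$ and $\sgn(hz - i/h) = -i\,\sgn(1 + i h^2 z)$. Substituting into \eqref{eq:ImhReh} produces the identity $\Re_h(z) = \Im_h(iz)$; since $\Im(iz) = \Re z$ and $|iz| = |z|$, the $\Re$ estimate follows at once from the $\Im$ estimate applied at $iz$.

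Second I would simplify $\Im_h(z)$ itself. Setting $u := h^2 z$ and using the identity $(1+u)^2 - |1+u|^2 = (1+u)(u - \bar u) = 2i\,\Im u \cdot (1+u)$, a rationalization gives
\[
 \sgn(1+u) - 1 \;=\; \frac{(1+u)^2 - |1+u|^2}{|1+u|\bigl(1 + u + |1+u|\bigr)} \;=\; \frac{2i\,\Im u \cdot \sgn(1+u)}{1 + u + |1+u|}.
\]
Multiplying by $-i/h^2$ and using $\Im u = h^2 \Im z$ produces the clean expression $\Im_h(z) = \Im z \cdot \frac{2\,\sgn(1+u)}{1+u+|1+u|}$, and therefore
\[
 \Im_h(z) - \Im z \;=\; \Im z \cdot \frac{2\,\sgn(1+u) - (1+u) - |1+u|}{1 + u + |1+u|}.
\]

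The remaining step is a direct estimate. The hypothesis $h < 1/(2+2|z|)$ is equivalent to $h + h|z| < 1/2$, which forces $|u| = h \cdot h|z| < 1/4$, hence $|1+u| \geq 3/4$ and $\bigl|1 + u + |1+u|\bigr| \geq 2(1-|u|) \geq 3/2$. For the numerator I would split $2\sgn(1+u) - (1+u) - |1+u| = \bigl(\sgn(1+u) - (1+u)\bigr) + \bigl(\sgn(1+u) - |1+u|\bigr)$: the first term equals $(1+u)\bigl(1 - |1+u|\bigr)/|1+u|$, so its modulus is $\bigl|1 - |1+u|\bigr| \leq |u|$ by the reverse triangle inequality; the second term is bounded by a fixed multiple of $|u|$ using the displayed formula for $\sgn(1+u) - 1$ together with the same inequality $\bigl|1 - |1+u|\bigr| \leq |u|$. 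This gives $|\Im_h(z) - \Im z| \leq C |\Im z| \cdot |u| \leq C h^2 |z|^2$ for an explicit moderate constant $C$. Since $h|z| < 1/2$ forces $h^2|z|^2 < (h|z|)/2$, the resulting bound is comfortably below $2h|z|$. Nothing in the argument is conceptually difficult; the only mild obstacle is tracking the constants to land below $2h|z|$, but the slack in the hypothesis leaves ample room.
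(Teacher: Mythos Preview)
Your proposal is correct. The reduction $\Re_h(z)=\Im_h(iz)$ matches the paper's symmetry argument, but the core estimate for $|\Im z-\Im_h(z)|$ is organized differently. The paper sets $w=hz+1/h$ and splits $\sgn(w)-1$ into its real and imaginary parts, bounding $\bigl|h^{-2}\Im(w/|w|)-\Im z\bigr|\le h|z|$ and $\bigl|h^{-2}(\Re(w/|w|)-1)\bigr|\le h^2|z|^2$ separately, then adds. You instead rationalize to obtain the closed form $\Im_h(z)=\Im z\cdot\dfrac{2\sgn(1+u)}{1+u+|1+u|}$ with $u=h^{2}z$, which makes the factor $\Im z$ explicit and reduces everything to bounding a single rational expression in $u$. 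Tracking your constants gives a numerator bound of $10|u|/3$ and a denominator bound of $3/2$, hence $|\Im_h(z)-\Im z|\le\tfrac{20}{9}h^{2}|z|^{2}<\tfrac{10}{9}h|z|<2h|z|$, so the slack is indeed ample. Your route is a bit slicker algebraically (one error term instead of two), while the paper's real/imaginary split is more pedestrian but makes the individual contributions $h|z|$ and $h^{2}|z|^{2}$ visible.
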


\begin{proof}
  See \Cref{sub:RealImaginaryPartProof}.
\end{proof}

\begin{proof}[Proof of \Cref{prop:RealImaginary}]
	Set $h := \frac{\eps}{2 + 2 R}$, noting that indeed
	$0 < h \leq \frac{1}{2 + 2 |z|}$ and $h \, |z| \leq \frac{\eps}{2}$ whenever $|z| \leq R$.
	Note that $w := h z - \frac{i}{h}$ satisfies
	$|w| \leq \frac{1}{h} + h \, |z| \leq \frac{2}{h} =: R'$ and
	\({
    |w|
    = |h z - \frac{i}{h}|
    \geq \frac{1}{h} - h \, |z|
    \geq 2 - \frac{1}{2}
    \geq 1 ,
	}\)
	so that $\sgn(w) = w - \sigma(w) = \mathrm{Id}_{R'}(w) - \sigma(w)$,
	with $\mathrm{Id}_{R'}$ as in \Cref{eq:IdentityRepresentation}.
	Putting together the definitions of $h,R',w$ and of $\mathrm{Id}_R$,
  we see that
  \begin{equation}
    \begin{split}
      \Re_h (z)
      & = h^{-2} \cdot \big( \mathrm{Id}_{R'} (w) - \sigma(w) + i \big) \\
      & = h^{-2} \cdot
          \Big(
          \sigma(2 h z + \tfrac{4}{h} + 2 - \tfrac{2 i}{h})
          - \sigma(h z + \tfrac{2}{h} + 1 - \tfrac{i}{h})
          - \sigma(h z - \tfrac{i}{h})
          + i - \tfrac{2}{h} - 1
          \Big) \\
      & =: \Re_{R,\eps}(z)
    \end{split}
    \label{eq:RealPartApproximation}
  \end{equation}
  is implemented by a shallow $\sigma$-network with $5$ neurons and $10$ weights (see Figure~\ref{fig:RealPartNet}),
  where all the weights are bounded by $\frac{4}{h^3} \leq C \, R^3 / {\eps}^3$
  for an absolute constant $C > 0$.
  Finally, \Cref{lem:RealImaginary} shows $|\Re(z) - \Re_{R,\eps}(z)| \leq \eps$
  for all $z \in \CC$ with $|z| \leq R$.
  The claim concerning the approximation of $\Im (z)$ is shown similarly.
\end{proof}

\begin{figure}[htb]
\centering
\scalebox{0.65}{%
\begin{tikzpicture}
\begin{scope}[every node/.style={rectangle,thick,draw}]
\node(In) at (0,0) {$z$};
\node(L1) at (6,0) {$\begin{matrix} N_1 \\ N_2 \\ N_3 \end{matrix}$};
\node(Out) at (12,0) {$\Re_{R,\eps}$};
\end{scope}
\begin{scope}[every node/.style={rectangle},
              every edge/.style={draw=blue, thick, sloped, above, align=center, midway}]
  \path [-to] (In) edge node {$\sigma\circ T_1$} (L1);
  \path [-to] (L1) edge node {$T_2$} (Out);
\end{scope}
\end{tikzpicture}}
\caption{Architecture of the network $\Re_{R,\eps}$, where $T_1(\cdot)=A_1(\cdot)+ b_1$
for $A_1=\big(\tfrac{\eps}{1+R},\tfrac{\eps}{2(1+R)},\tfrac{\eps}{2(1+R)}\big)^T$
and $b_1=\big(\tfrac{(8-4i)(1+R)}{\eps}+2, \tfrac{(4-2i)(1+R)}{\eps}+1,\tfrac{-2i(1+R)}{\eps}\big)^T$,
and $T_2(\cdot)=A_2(\cdot)+b_2$ for $A_2=\tfrac{4(1+R)^2}{\eps^2} (1,-1,-1)$
and $b_2=\tfrac{4(1+R)^2}{\eps^2} \cdot \big(-1-\tfrac{4(1+R)}{\eps}+i\big).$}
\label{fig:RealPartNet}
\end{figure}

\section{Approximating the squared real part}
\label{sec:SquareStuff}

The main result of this section is \Cref{prop:RealSquared} below, showing that
the function $z \mapsto (\Re(z))^2$
on the set $\{ z \in \CC : |z| \le R, \ | \Re(z) | \leq 1 \}$
can be uniformly approximated up to error $\eps$ by modReLU networks
with $\CalO \big( \ln( 2 / \eps ) \big)$ layers
and $\CalO \big( \ln^2( 2 / \eps ) \big)$ weights
of size $\CalO \big( R^6 \, \eps^{-7} \big)$.

As a first step towards proving \Cref{prop:RealSquared},
we show that modReLU networks can approximate functions
of the form $z \mapsto \varrho(\Re(z) + c)$ with the usual ReLU $\varrho$;
this will then allow us to use the approximation of the square function by ReLU
networks as derived in \cite{YarotskyReLUBounds}.

\begin{proposition}\label{prop:ReLUreal}
	For any choice of $R \geq 1$, $c \in \R$, and $\eps \in (0,1)$,
  there exist functions $\varrho^{\Re,c}_{R,\eps}, \varrho^{\Im,c}_{R,\eps} : \CC \to \CC$
  that are implemented by depth-$3$ $\sigma$-networks with $6$ neurons and $11$ weights,
  all bounded in absolute value by $C \cdot R^3 / \eps^3 + 2 |c|$ (with an absolute constant $C$),
  satisfying $|\varrho^{\Re,c}_{R,\eps}(z) - \varrho(\Re(z) + c)| \leq \eps$
  and ${|\varrho^{\Im,c}_{R,\eps}(z) - \varrho(\Im(z) + c)| \leq \eps}$
  for all $z \in \CC$ with $|z| \leq R$.
\end{proposition}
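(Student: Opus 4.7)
My approach is compositional: first approximate $\Re(z)+c$ with Prop.~\ref{prop:RealImaginary}, then pass the result through one additional modReLU neuron that realises the ReLU on the almost-real output. This yields a two-hidden-layer $\sigma$-network of exactly the advertised size.

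\textbf{Key identity.} The central observation is that for any real $y$ with $y\geq -2M$ and any $M>0$,
\[ M\,\sigma\!\Bigl(\tfrac{y}{M}+1\Bigr)\;=\;\varrho(y)\qquad\text{exactly}. \]
Indeed, $y/M+1$ is real; on $[-1,1]$ (i.e.\ $y\in[-2M,0]$) $\sigma$ vanishes by definition, and on $[1,\infty)$ (i.e.\ $y\geq 0$) $\sigma$ returns $y/M+1-1=y/M$. In both cases we recover $\varrho(y)/M$, and multiplying by $M$ gives $\varrho(y)$. Thus \emph{a single} modReLU, with a large enough scale, realises ReLU on any bounded real interval.

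\textbf{Construction and error.} Pick $M:=R+|c|+1$ and $\eps':=\eps/(2L_\sigma)$, where $L_\sigma$ is a uniform Lipschitz constant of $\sigma$ on $\CC$; an elementary check using the estimate $|z_1/|z_1|-z_2/|z_2||\leq 2|z_1-z_2|/\max(|z_1|,|z_2|)$ on the phase map, together with piecewise bounds across the kink $|w|=1$, yields $L_\sigma\leq 3$. Use the three-neuron first hidden layer of $\Re_{R,\eps'}$ from Prop.~\ref{prop:RealImaginary}, then append a second hidden layer with one neuron computing
\[ z\ \longmapsto\ \sigma\!\Bigl(\tfrac{\Re_{R,\eps'}(z)+c}{M}+1\Bigr), \]
realised by rescaling Prop.~\ref{prop:RealImaginary}'s readout weights by $1/M$ and absorbing the shifted constant $(b_2+c)/M+1$ into the new bias; the output layer multiplies by $M$. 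Writing $\Re_{R,\eps'}(z)=\Re(z)+\delta(z)$ with $|\delta(z)|\leq\eps'$ (by Prop.~\ref{prop:RealImaginary}), the argument of the outer $\sigma$ equals $\tilde t+\delta(z)/M$ with $\tilde t:=(\Re(z)+c)/M+1\in\R$ and $|\delta(z)/M|\leq\eps'/M$. The Lipschitz bound gives
\[ \bigl|M\sigma(\tilde t+\delta/M)-M\sigma(\tilde t)\bigr|\ \leq\ M\cdot L_\sigma\cdot(\eps'/M)\ =\ L_\sigma\eps'\ =\ \eps/2,\]
and, since $\Re(z)+c\geq -(R+|c|)>-2M$, the key identity gives $M\sigma(\tilde t)=\varrho(\Re(z)+c)$ exactly. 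The triangle inequality yields total error $\leq\eps$.

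\textbf{Accounting.} The network has $3+1=4$ hidden neurons distributed across two hidden layers. The first layer's input weights and biases are those of Prop.~\ref{prop:RealImaginary}, bounded by $CR^3/{\eps'}^3\leq C'R^3/\eps^3$; the inter-layer weights are the Prop.~\ref{prop:RealImaginary} readout weights divided by $M\geq 1$, hence of the same order; the second-layer bias is at most $C'R^3/\eps^3+|c|$; and the final scale is $M\leq R+|c|+1$. So every weight is bounded by $C R^3/\eps^3+|c|$, and the total number of nonzero weights fits within the advertised budget of $13$. The imaginary-part version $\varrho^{\Im,c}_{R,\eps}$ follows immediately from the identity $\Im(z)=\Re(-iz)$, i.e.\ by pre-composing with $z\mapsto -iz$ (equivalently, by using $\Im_{R,\eps'}$ in the first layer).

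\textbf{Main obstacle.} The single delicate step is that $\Re_{R,\eps'}(z)$ is \emph{not} exactly real: it has a complex perturbation of modulus $\leq\eps'$, and the modReLU's nonlinearity near its kink could, a priori, amplify that perturbation into a sizeable complex output. What rescues the argument is the global Lipschitz continuity of $\sigma$ over all of $\CC$; it costs only a fixed constant factor $L_\sigma$ in the choice of $\eps'$, so that Prop.~\ref{prop:RealImaginary}'s weight scaling $R^3/\eps^3$ is preserved and only one additive $|c|$ appears (through the scale $M$).
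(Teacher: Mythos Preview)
Your proposal is correct and follows essentially the same route as the paper: compose $\Re_{R,\eps'}$ with a single rescaled modReLU neuron via the identity $\sigma(x+1)=\varrho(x)$ for $x\geq -2$, and control the complex perturbation of $\Re_{R,\eps'}(z)$ using the global Lipschitz continuity of $\sigma$. The only cosmetic difference is that the paper proves $\sigma$ is exactly $1$-Lipschitz (via the law of cosines), whereas you take $L_\sigma\leq 3$ and compensate by shrinking $\eps'$; this costs only an absolute constant in the weight bound and is immaterial.
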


\begin{proof}
	Let us first prove the statement for $\varrho^{\Re,c}_{R,\eps}$.
  To this end, first note that the modReLU $\sigma : \CC \to \CC$ is $1$-Lipschitz;
  see \Cref{lem:SigmaLipschitz} below.

  \begin{figure}[ht]
    \begin{center}
      \includegraphics[width=0.36\textwidth]{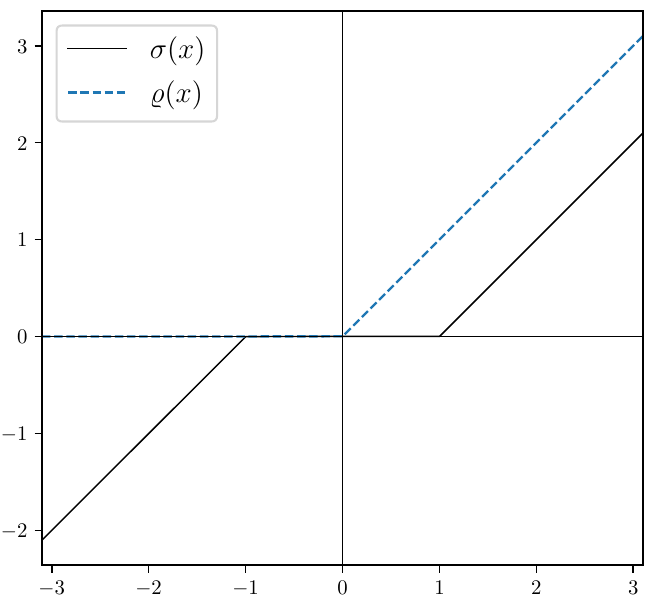}
    \end{center}
    %\captionsetup{width=.8\linewidth,font=small}
    \caption{\label{fig:SigmaRealPlot}A plot of the modReLU function $\sigma$
             on $[-3,3]$.
             The plot shows that $\sigma(x+1) = \varrho(x)$ for $x \in [-2,\infty)$.}
  \end{figure}

	Now, set $\delta := 1 / \bigl(2 \cdot (R+|c|)\bigr)$ and define
	\begin{equation}
	   \varrho_{R,\eps}^{\Re,c} : \quad
     \CC \to \CC, \quad
     z \mapsto \frac{1}{\delta}
               \cdot \sigma \big( 1 + \delta \cdot \big( \Re_{R,\eps} (z)+c \big) \big),
    \label{eq:ComplexifiedReLUNetwork}
	\end{equation}
	where $\Re_{R,\eps}$ is as in \Cref{prop:RealImaginary}.
  Now, a direct computation (see also \Cref{fig:SigmaRealPlot})
  shows that $\sigma(x+1) = \varrho(x)$ for $x \in [-2,\infty)$.
	Because of $|\delta \cdot (\Re(z)+c)| \leq \delta \cdot (|z| + |c|) \leq \frac{1}{2}$
  for $|z| \leq R$, this implies
  \(
    \frac{1}{\delta}
    \cdot \sigma \big( 1 + \delta \cdot (\Re(z) + c) \big)
    = \frac{1}{\delta}
      \cdot \varrho \bigl( \delta \cdot (\Re(z) + c) \bigr)
    = \varrho \big( \Re(z) + c \big) .
  \)
  Combined with the $1$-Lipschitz continuity of $\sigma$, we thus see
	\begin{align*}
    |\varrho(\Re(z) + c) - \varrho_{R,\eps}^{\Re,c}(z)|
    & = \abs{
          \tfrac{1}{\delta} \cdot \sigma ( 1 + \delta \cdot (\Re (z) + c) )
          - \tfrac{1}{\delta} \cdot \sigma ( 1 + \delta \cdot (\Re_{R,\eps} (z) + c) )
        } \\
    &\leq |\Re (z) - \Re_{R,\eps}(z)|
    \leq \eps
    \quad \text{ for all } z \in \CC \text{ with } |z| \leq R .
	\end{align*}
  Based on the properties of $\Re_{R,\eps}$ from \Cref{prop:RealImaginary}
  (see also \Cref{eq:RealPartApproximation} noting that $h = \frac{\eps}{2 + 2 R}$
  in that equation), it follows that $\varrho_{R,\eps}^{\Re,c}$ is implemented
  by a depth-$3$ $\sigma$-network with $6$ neurons and $11$ weights (see Figure~\ref{fig:RealReLUNet}),
  all bounded in absolute value by $2 \, |c| + C \cdot R^3/\eps^3$.
  The construction of $\varrho_{R,\eps}^{\Im,c}$ is similar,
	replacing $\Re_{R,\eps}(z)$ with $\Im_{R,\eps}(z)$.
\end{proof}

\begin{figure}[htb]
\centering
\scalebox{0.65}{%
\begin{tikzpicture}
\begin{scope}[every node/.style={rectangle,thick,draw}]
\node(In) at (0,0) {$z$};
\node(L1) at (6,0) {$\begin{matrix} N_1 \\ N_2 \\ N_3 \end{matrix}$};
\node(L2) at (12,0) {$N_4$};
\node(Out) at (18,0) {$\varrho_{R,\eps}^{\Re,c}(z)$};
\end{scope}
\begin{scope}[every node/.style={rectangle},
              every edge/.style={draw=blue, thick, sloped, above, align=center, midway}]
  \path [-to] (In) edge node {$\sigma\circ T_1$} (L1);
  \path [-to] (L1) edge node {$\sigma\circ T_2$} (L2);
  \path [-to] (L2) edge node {$T_3$} (Out);
\end{scope}
\end{tikzpicture}}
\caption{Architecture of the network $\varrho_{R,\eps}^{\Re,c}$, where $T_1(\cdot)=A_1(\cdot)+ b_1$
for $A_1=\big(\tfrac{\eps}{1+R},\tfrac{\eps}{2(1+R)},\tfrac{\eps}{2(1+R)}\big)^T$
and $b_1=\big(\tfrac{(8-4i)(1+R)}{\eps}+2, \tfrac{(4-2i)(1+R)}{\eps}+1,\tfrac{-2i(1+R)}{\eps}\big)^T$,
and $T_2(\cdot)=A_2(\cdot)+b_2$ for $A_2=\tfrac{2(1+R)^2}{(R+|c|)\eps^2} (1,-1,-1)$
and $b_2=\tfrac{2(1+R)^2}{(R+|c|)\eps^2}\big(-1-\tfrac{4(1+R)}{\eps}+i\big)+1+\tfrac{c}{2(R+|c|)}$,
and finally $T_3(z) = 2(R+|c|) \cdot z.$}
\label{fig:RealReLUNet}
\end{figure}

The next lemma shows that $\sigma : \CC \to \CC$
is $1$-Lipschitz, which was used in the proof above.

\begin{lemma}\label{lem:SigmaLipschitz}
  The modReLU function $\sigma : \CC \to \CC$ defined in \Cref{eq:ModReLUDefinition}
  is $1$-Lipschitz, i.e., $|\sigma(z) - \sigma(w)| \leq |z - w|$ for all $z,w \in \CC$.
\end{lemma}

\begin{proof}
	Simply note that
	\begin{align*}
		\big| \sigma(z) - \sigma(w) \big|
		= \begin{cases}
        0 & \text{if } |z|,|w| \le 1, \\
        |\sigma(z)| = |z|-1 \le |z| - |w| \le |z - w|
        & \text{if } |z| > 1 ~ \text{and} ~ |w| \le 1 , \\
        |\sigma(w)| = |w|-1 \le |w| - |z| \le |w - z|
        & \text{if } |z| \le 1 ~ \text{and} ~ |w| > 1 , \\
        \big| (z - \frac{z}{|z|}) - (w - \frac{w}{|w|}) \big| \le |z - w|
        &  \text{if } |z|,|w|  >  1 ,
      \end{cases}
	\end{align*}
  where we used that if $z, w \in \CC$ with $|z|,|w|  >  1$, then
  \begin{align*}
    \big| (z - \tfrac{z}{|z|}) - (w - \tfrac{w}{|w|}) \big|^2
    &= \big| \tfrac{z}{|z|} \cdot (|z| - 1) - \tfrac{w}{|w|} \cdot (|w| - 1) \big|^2 \\
    &= (|z|-1)^2
       + (|w|-1)^2
       - 2 (|z|-1)(|w|-1) \Re \big( \tfrac{z}{|z|} \, \tfrac{\overline{w}}{|w|} \big)  \\
    &= |z|^2
       + |w|^2
       - 2|z||w| \Re \big(  \tfrac{z}{|z|} \, \tfrac{\overline{w}}{|w|} \big)
       - \big( 2|z| + 2|w| - 2 \big)
         \big( 1- \Re \big(  \tfrac{z}{|z|} \, \tfrac{\overline{w}}{|w|} \big) \big) \\
    &\leq |z|^2
          + |w|^2
          - 2|z||w|  \Re \big(  \tfrac{z}{|z|} \, \tfrac{\overline{w}}{|w|} \big)
      = |z - w|^2 .
      %\qedhere
  \end{align*}
\end{proof}

Our next goal is to construct $\sigma$-networks approximating the function $z \mapsto (\Re z)^2$.
This will be based on combining \Cref{prop:ReLUreal} with the approximation
of the real function $x \mapsto x^2$ by ReLU networks, as presented in \cite{YarotskyReLUBounds}.

The construction in \cite{YarotskyReLUBounds} is based on the following auxiliary functions,
depicted in \Cref{fig:GPlot}:
\begin{alignat*}{5}
  g : & \quad \R \to \R,  \quad
  g(x)  &&:= 2 \varrho(x) - 4 \varrho(x - \tfrac{1}{2}) + 2 \varrho(x - 1) ,  \\
  g_k : & \quad \R \to \R, \quad
  g_k(x) &&:= \underset{k-\text{times}}
                      { \underbrace{ g \circ \cdots \circ g } } (x)
         \quad \text{for} \;\; k \in \N ,  \\
  f_m : & \quad \R \to \R, \quad
  f_m(x) &&:= x - \sum_{k = 1}^m \frac{g_k(x)}{2^{2k}}
         \quad \text{for} \;\; m \in \N \cup \{ 0 \} .
\end{alignat*}

\begin{figure}[h]
  \begin{center}
    \includegraphics[width=0.48\columnwidth]{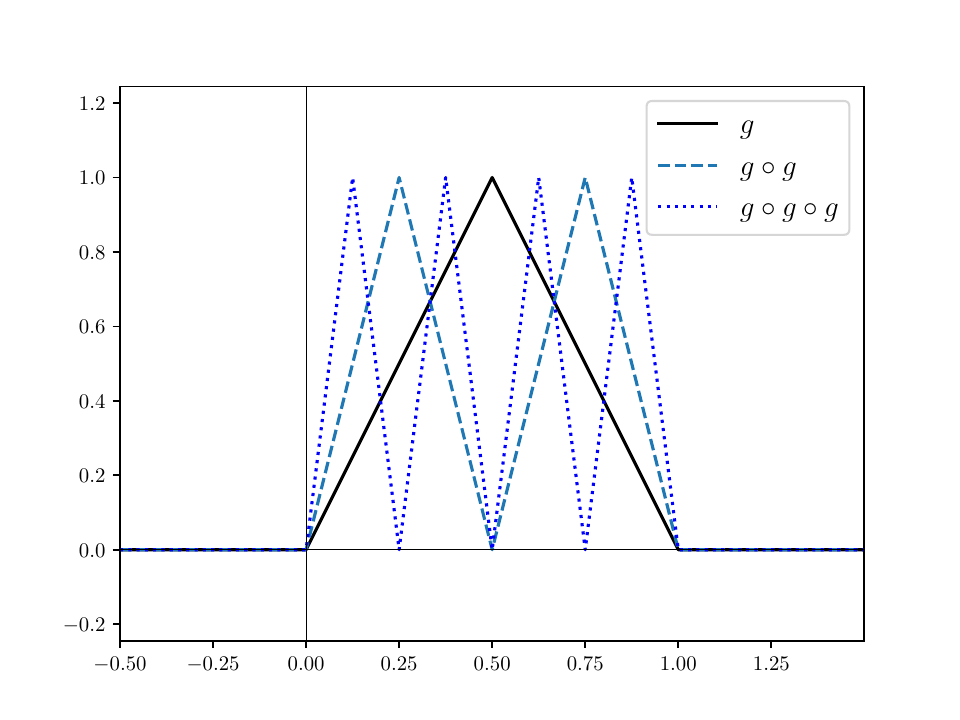}
    \quad
    \includegraphics[width=0.48\columnwidth]{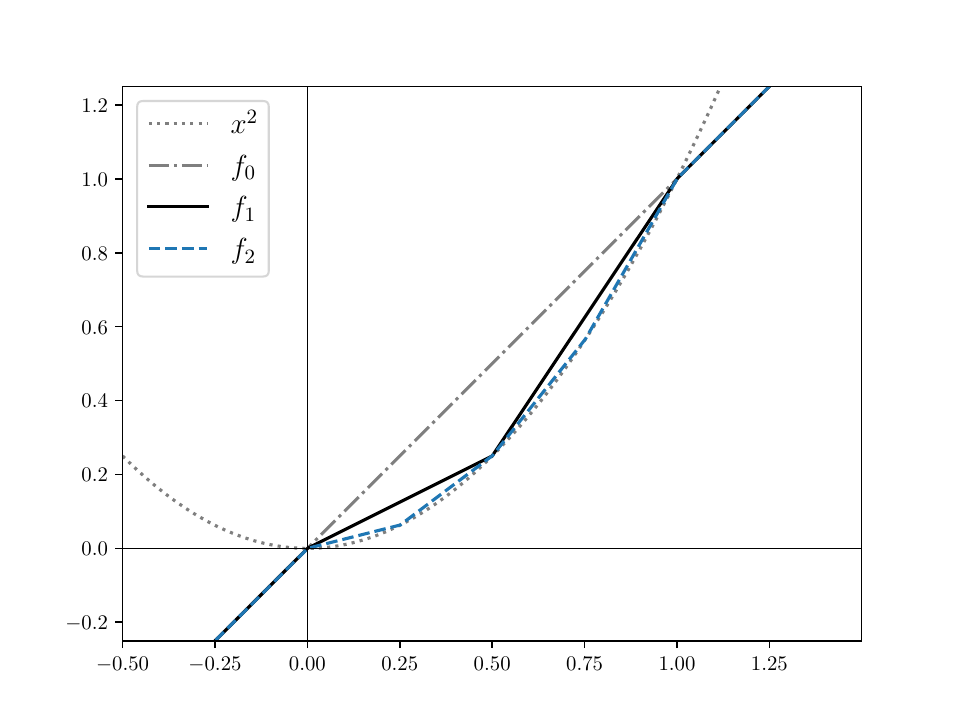}
  \end{center}
  %\captionsetup{width=.8\linewidth,font=small}
  \caption{\label{fig:GPlot}A plot of the function $g$, its compositions
    $g \circ g$ and $g \circ g \circ g$, and the square approximations $f_m$, for $m=0,1,2$.
    Evidently, $g$ is $2$-Lipschitz.}
\end{figure}

\noindent
One can show (cf.~\cite[Proof of Proposition~2]{YarotskyReLUBounds}) that
\begin{align}\label{eq:Estimate1}
  \abs{ x^2 - f_m(x)} \le 2^{-2m-2}
  \quad \text{for} \;\; 0 \leq x \leq 1 .
\end{align}
Further, we define
\begin{alignat*}{5}
  g^{\Re} : & \quad \CC \to \CC,  \quad
  g^{\Re}(z) && := g\bigl(\Re(z)\bigr)
                    	= 2 \varrho\bigl(\Re(z)\bigr)
                      	- 4 \varrho\bigl(\Re(z) - \tfrac{1}{2}\bigr)
                      	+ 2 \varrho\bigl(\Re(z) - 1\bigr)   , \\
  g^{\Re, k} : & \quad \CC \to \CC, \quad
  g^{\Re, k}(z)  && := \underset{k-\text{times}}
                               { \underbrace{ g^{\Re} \circ \cdots \circ g^{\Re} } } (z)
                    = g_k \bigl(\Re (z)\bigr)
    \quad (\text{since} \;\; g : \R \rightarrow \R) .
\end{alignat*}
As is clear from \Cref{fig:GPlot}, the function $g : \R \rightarrow \R$ is $2$-Lipschitz,
which in turn implies that $g^{\Re} : \CC \rightarrow \CC$ is $2$-Lipschitz; indeed,
\(
  \big| g(\Re(z)) - g(\Re(z')) \big|
  \leq 2 \, | \Re(z) - \Re(z') |
  \leq 2 \, | z - z' |
\)
for $z, z' \in \CC$.
In view of \Cref{prop:ReLUreal}, we consider the approximation
of $g^{\Re}$ and $g^{\Re, k}$ respectively by the following functions:
\begin{alignat*}{5}
  g_{R,\eps}^{\Re} :
  & \quad \CC \to \CC, \quad
  g_{R,\eps}^{\Re}(z)
  && := 2 \varrho_{R,\eps}^{\Re,0} (z)
        - 4 \varrho_{R,\eps}^{\Re,-1/2} (z)
        + 2 \varrho_{R,\eps}^{\Re,-1} (z) ,  \\
  g_{R,\eps}^{\Re, k} :
  & \quad \CC \to \CC, \quad
  g_{R,\eps}^{\Re, k}(z)
  && := \underset{k-\text{times}}
                 { \underbrace{ g_{R,\eps}^{\Re} \circ \cdots \circ g_{R,\eps}^{\Re} } } (z)
     \quad \text{for } k \in \N ,
\end{alignat*}
where $R \geq 1$ and $\eps \in (0,1)$.
As the last preparation for the proof of \Cref{prop:RealSquared},
we need the following technical lemma concerning the size of $g_{R,\eps}^{\Re,k} (z)$.

\begin{lemma}\label{lem:MagnitudeBound}
	Let $R \ge 1$, $0 < \eps < \min \{1, \frac{R}{8} \}$, and $k \in \N$.
  Then $\big| g_{R+1,\eps}^{\Re, k}(z) \big| \le R+1$ for all $z \in \CC$ with $|z| \le R+1$.
\end{lemma}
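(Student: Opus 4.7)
The strategy is a straightforward induction on $s$, combining two ingredients: an $L^\infty$-bound on the exact function $g$, and the pointwise error estimate from \Cref{prop:ReLUreal} applied to the three ReLU-type summands that make up $g_{R+m,\eps}^{\Re}$.

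First, I would observe that the piecewise linear function $g$ actually takes values in $[0,1]$ on all of $\R$: it is a symmetric tent supported on $[0,1]$ with peak $g(\tfrac12) = 1$ and vanishes outside $[0,1]$, as is clear from the definition (and visible in \Cref{fig:GPlot}). Consequently $|g^{\Re}(z)| = |g(\Re z)| \le 1$ for every $z \in \CC$, with no dependence on $|z|$ whatsoever.

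Second, I would use \Cref{prop:ReLUreal} with approximation radius $R+m$ for each of the constants $c \in \{0, -\tfrac12, -1\}$ to get, by the triangle inequality,
\[
  \bigl|g_{R+m,\eps}^{\Re}(w) - g^{\Re}(w)\bigr|
  \le (2+4+2)\,\eps = 8\eps
  \qquad \text{whenever } |w| \le R+m.
\]
Combined with $|g^{\Re}(w)| \le 1$, this yields $|g_{R+m,\eps}^{\Re}(w)| \le 1 + 8\eps$ on the disk $|w| \le R+m$. The assumption $\eps < R/8$ then gives $1+8\eps < 1+R \le R+m$, where the last inequality uses $m \ge 1$; in particular $|g_{R+m,\eps}^{\Re}(w)| \le R+1$.

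Finally, I would run induction on $s$. The base case $s=1$ follows directly from the previous step, since $|z| \le R \le R+m$. For the inductive step, suppose $|g_{R+m,\eps}^{\Re,s}(z)| \le R+1$; then in particular $|g_{R+m,\eps}^{\Re,s}(z)| \le R+m$, so one more application of the second step's bound to $w := g_{R+m,\eps}^{\Re,s}(z)$ gives $|g_{R+m,\eps}^{\Re,s+1}(z)| \le 1+8\eps \le R+1$, closing the induction.

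The only point requiring care is why the subscript is inflated from $R$ to $R+m$: the guarantee of \Cref{prop:ReLUreal} must remain valid after each iteration, and this forces the iterates to stay inside the disk of validity. The cushion $m \ge 1$ is exactly what allows the invariant bound $R+1$ to fit inside that radius. Beyond this bookkeeping there is no real obstacle, since the true target $g$ is already globally bounded by $1$ on $\R$.
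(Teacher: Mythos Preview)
Your proposal is correct and follows essentially the same approach as the paper: both use the global bound $|g^{\Re}|\le 1$, the $8\eps$ error estimate from \Cref{prop:ReLUreal}, and an induction showing the iterates stay within the disk of radius $R+1\le R+m$. The only cosmetic difference is that the paper phrases the invariant as ``$g_{R+m,\eps}^{\Re}$ maps $\{|z|\le R+1\}$ into itself'' and iterates, whereas you state the same thing as an explicit induction on $s$.
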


\begin{proof}
	\Cref{prop:ReLUreal} implies that if $z \in \CC$ with $|z| \le R + 1$, then
	\[
    \abs{ g_{R+1,\eps}^{\Re}(z) - g^{\Re}(z) }
    \leq (2 + 4 + 2) \cdot \eps
    \leq R
	\]
	and since $|g^{\Re}(z)| = |g(\Re(z))| \leq 1$ for all $z \in \CC$ (see \Cref{fig:GPlot}), we have
	\[
    |g_{R+1,\eps}^{\Re}(z)|
    \leq  \abs{ g_{R+1,\eps}^{\Re}(z) - g^{\Re}(z) } + \abs{ g^{\Re}(z) }
    \leq R+1 .
	\]
	This shows that $g_{R+1,\eps}^{\Re}$ maps $\{ z \in \CC \colon |z| \leq R+1 \}$ into itself.
	It then follows by induction that $g_{R+1,\eps}^{\Re,k}$ maps
	$\{ z \in \CC \colon |z| \leq R+1 \}$ into itself, as claimed.
\end{proof}

\begin{proposition}\label{prop:RealSquared}
  Let $R \geq 3$ and $0 < \varepsilon < \min\{1,\frac{R}{8}\}$.
  There exists a function $\Phi_{R,\eps} : \CC \to \CC$ that is implemented
  by a $\sigma$-network of depth $\CalO(\ln(2/\eps))$ and width $\CalO(\ln(2/\eps))$
  and with the number of weights and neurons bounded by $\CalO(\ln^2(2/\eps))$
  and all weights bounded by $\CalO(R^6 / \eps^7)$
  and such that $|(\Re z)^2 - \Phi_{R,\eps}(z)| \leq \eps$
  for all $z \in \CC$ with $|z| \leq R$ and $|\Re z| \leq 1$.
\end{proposition}

\begin{proof}
First, it holds for any $z \in \CC$ with $|z| \le R+1$ that
\begin{align}\label{eq:gBound2}
   \abs{ g^{\Re}(z) - g_{R+1,\eps}^{\Re}(z)}
   \leq (2 + 4 + 2) \cdot \eps
   =    8\eps
\end{align}
and $\big| g_{R+1,\eps}^{\Re, k}(z) \big| \le R+1$ for all $k \in \N$,
by \Cref{prop:ReLUreal} and \Cref{lem:MagnitudeBound} respectively.
We claim that this implies
\begin{equation}
  |g^{\Re,k}(z) - g_{R+1,\eps}^{\Re,k}(z)|
  \leq 8 \eps \cdot (2^k - 1)
  \qquad \forall \, k \in \N \text{ and } |z| \leq R+1.
  \label{eq:RealSquaredInductionClaim}
\end{equation}
Indeed, for $k = 1$ we have $g_{R+1,\eps}^{\Re,k} = g^{\Re}_{R+1,\eps}$
and $g^{\Re,k} = g^{\Re}$, so that \Cref{eq:gBound2} shows
$|g^{\Re,k}(z) - g_{R+1,\eps}^{\Re,k}(z)| \leq 8 \eps = 8 \eps \cdot (2^k - 1)$.

Next, suppose \Cref{eq:RealSquaredInductionClaim} holds for some $k \in \N$.
\Cref{lem:MagnitudeBound} shows that $w := g_{R+1,\eps}^{\Re,k}(z)$ satisfies $|w| \leq R+1$.
Further, setting $w' := g^{\Re,k}(z)$, \Cref{eq:RealSquaredInductionClaim} shows
$|w - w'| \leq 8 \eps \cdot (2^k - 1)$. Thus, using that $g^{\Re}$ is $2$-Lipschitz, we see
\begin{align*}
  |g^{\Re,k+1}(z) - g_{R+1,\eps}^{\Re,k+1}(z)|
  & = |g^{\Re}(w') - g_{R+1,\eps}^{\Re}(w)| \\
  & \leq |g^{\Re}(w') - g^{\Re}(w)| + |g^{\Re}(w) - g_{R+1,\eps}^{\Re} (w)| \\
  & \overset{(\ast)}{\leq} 2 \cdot 8\eps \cdot (2^k - 1) + 8 \eps
    = 8 \eps \cdot (2^{k+1} - 1)
\end{align*}
where \Cref{eq:gBound2} was used at $(\ast)$.
Thus, \Cref{eq:RealSquaredInductionClaim} holds for $k+1$ if it holds for $k$.

Now, using the function $\mathrm{Id}_R$ from \Cref{eq:IdentityRepresentation}
(which is implemented by a $2$-layer $\sigma$-network with $7$ weights, all bounded by $2 R + 2$)
and the function $\Re_{R,\eps}$ from \Cref{prop:RealImaginary},
we define for $m \in \N$,
\begin{align*}
  f_{m,R,\eps}(z)
  &:= \Re_{R,\eps} \circ \mathrm{Id}_{R} \circ \cdots \circ \mathrm{Id}_{R}  (z)
      - \sum_{k = 1}^m
          \frac{g_{R+1,\eps}^{\Re, k} \circ \mathrm{Id}_{R} \circ \cdots \circ \mathrm{Id}_{R} (z)}
               {4^k}
	\quad \text{for} \;\; z \in \CC ,
\end{align*}
where the number of the ``factors'' $\mathrm{Id}_{R}$ is chosen such that all (sub)networks
have the same depth and thus can be added/subtracted---%
see \Cref{sub:DNNCalculusComposition,sub:DNNCalculusAddition} for details on implementing
composition and summation of networks.
It then follows for $m \in \N$ and $|z| \le R$ that
\begin{equation}\label{eq:Estimate2}
	\begin{split}
    |f_m(\Re(z)) - f_{m,R,\eps}(z)|
    &\le |\Re(z) - \Re_{R,\eps}(z)|
         + \sum_{k=1}^m
             \frac{| g^{\Re, k} (z) - g_{R+1,\eps}^{\Re, k} (z) |}
                  {4^k} \\
    &\le \eps + 8 \eps \sum_{k=1}^m 2^{-k}
    \le 9 \eps.
	\end{split}
\end{equation}
Setting $m := \big\lceil \tfrac{1}{2} \ln(\tfrac{1}{\eps}) / \ln (2) \big\rceil \in \N$
(so that $2^{-2m-2} \leq \eps$) and combining \eqref{eq:Estimate1} and \eqref{eq:Estimate2},
we deduce for $|z| \le R$ with $0 \leq \Re(z) \leq 1$ that
\begin{align}\label{eqn:FirstPartForRealz-in-01}
  |\Re(z)^2 - f_{m,R,\eps}(z)|
  \le |\Re(z)^2 - f_{m}(\Re(z))| + |f_{m}(\Re(z)) - f_{m,R,\eps}(z)| \le 10 \eps.
\end{align}

\medskip{}

We will now extend this result to $z \in \CC$ with $|z| \leq R$ and $| \Re(z) | \leq 1$.
Given such a $z$, define $w := \frac{1}{2} (z+1)$, noting that $|w| \leq R$ (since $R \geq 1$)
and $0 \leq \Re w \leq 1$.
Therefore, applying \Cref{eqn:FirstPartForRealz-in-01} to $w$ instead of $z$, we see
\(
  |(\Re w)^2 - f_{m,R,\eps}(w)|
  \leq 10 \eps .
\)
Note that
$(\Re w)^2 = \frac{1}{4} (1 + \Re z)^2 = \frac{1}{4} + \frac{1}{2} \Re z + \frac{1}{4} (\Re z)^2$
and hence $(\Re z)^2 = 4 \, (\Re w)^2 - 2 \Re z - 1$.
Thus, setting
\[
  h_{m,R,\eps}(z)
  := 4 \, f_{m,R,\eps}\bigl(\tfrac{1}{2}(z+1)\bigr)
    - 2 \Re_{R,\eps} \circ \mathrm{Id}_R \circ \cdots \circ \mathrm{Id}_R (z)
    - 1
  ,
\]
where again $\mathrm{Id}_R$ is used to match the depth of the (sub)networks, we see
\[
  |(\Re z)^2 - h_{m,R,\eps}(z)|
  \leq 4 \cdot |(\Re w)^2 - f_{m,R,\eps}(w)|
       + 2 \cdot |\Re(z) - \Re_{R,\eps}(z)|
  \leq 42 \eps .
\]

It remains to bound the depth, width, and number of weights
of the $\sigma$-network defining the function $\Phi_{R,\eps} (z) := h_{m,R,\eps} (z)$,
and to estimate the size of the weights.
The following estimates regarding these quantities should be fairly intuitive;
the reader interested in the full details is referred to
\Cref{sub:DNNCalculusComposition,sub:DNNCalculusAddition}.
Note that $f_{m,R,\eps}$, with our choice of
\(
  m
  = \big\lceil \tfrac{1}{2} \ln(\tfrac{1}{\eps}) / \ln (2) \big\rceil
  = \CalO(\ln( 2 /  \eps ))
  ,
\)
is a $\sigma$-network with depth and width $\mathcal{O}(m)$, and with $\CalO (m^2)$ neurons
and weights, all of which are bounded by $\CalO(\ln(2/\eps) R^6 / \eps^6) \subset \CalO(R^6 / \eps^7)$.
Consequently, $\Phi_{R,\eps}$ is a $\sigma$-network
whose depth and width is $\CalO ( \ln( 2/\eps ) )$,
whose number of weights, and neurons are $\CalO ( \ln^2( 2/\eps ) )$,
and whose weights are bounded  by $\CalO ( R^6 / \eps^7 )$.
\end{proof}

\section{Approximating the product of complex numbers}
\label{sec:ProductStuff}

%%%%%%%%%%%%%%%%%%%%%%%%%%%%%%%%%%%%%%%%%%%%%%%%%%%%%%%%%%%%
%\input{ProductStuff.tex}		%!TEX root=./Draft.tex

In this section, we approximate the map
$\CC^2 \rightarrow \CC, (z,w) \mapsto z w$ using modReLU networks.
To do so, we first approximate the function ${\CC^2 \rightarrow \CC, (z,w) \mapsto \Re(z)\Re(w)}$
based on the approximation of $(\Re z)^2$ from \Cref{prop:RealSquared}
and then use a polarization argument.
This idea is motivated by \cite[Proposition~3]{YarotskyReLUBounds}.

\begin{proposition}\label{prop:RealApprox}
	Given $R \ge 3$ and $\varepsilon \in (0,1)$, there is a function
  $\widetilde{\times}_{\Re, R, \eps} \colon \mathbb{C}^2 \rightarrow \mathbb{C}$ such that
  \begin{enumerate}[itemsep=0.2cm, topsep=0.2cm]
		\item for any inputs $z,w \in \mathbb{C}$ with $|z|,|w| \le R$
          we have $| \widetilde{\times}_{\Re, R, \eps} (z,w) - \Re(z)\Re(w)| \le \varepsilon$;

    \item the function $\widetilde{\times}_{\Re, R, \eps}$ is implemented by a $\sigma$-network
          of depth and width $\CalO \big( \ln( R^2 \, \eps^{-1} ) \big)$, with at most
          $\CalO \big( \ln^2( R^2 \, \eps^{-1} ) \big)$ weights and neurons,
          and all weights bounded in absolute value by $\CalO \big( R^{16} \eps^{-7} \big)$.
	\end{enumerate}
\end{proposition}

\begin{proof}
Define $R' := 3$ and note that $0 < \eps' := \frac{\eps}{6R^2} < \frac{1}{54} \leq \min\{1,\frac{R'}{8}\}$.
Therefore, we can apply \Cref{prop:RealSquared} with $R',\eps'$ instead of $R,\eps$,
which produces the function $\Phi_{R',\eps'} = \Phi_{3, \frac{\eps}{6 R^2}}$.
We then set
\[
  \widetilde{\times}_{\Re, R, \eps} (z,w)
  := 2R^2 \cdot
    \Big(
      \Phi_{3,\frac{\eps}{6R^2}} ( \tfrac{z+w}{2R} )
      - \Phi_{3,\frac{\eps}{6R^2}} ( \tfrac{z}{2R} )
      - \Phi_{3,\frac{\eps}{6R^2}} ( \tfrac{w}{2R} )
    \Big)
  \quad \text{for} \;\; z, w \in \CC.
\]
Comparing with the equation
\[
  \Re(z)\Re(w)
  = 2R^2
    \cdot \Big(
            [\Re( \tfrac{z+w}{2R} )]^2
            - [\Re( \tfrac{z}{2R} )]^2
            - [\Re( \tfrac{w}{2R} )]^2
          \Big)
  \quad \text{for} \;\; z, w \in \CC
\]
and applying \Cref{prop:RealSquared}, we see that if $z,w \in \CC$ with $|z|,|w| \le R$, then
\begin{align*}
  & \big| \Re(z)\Re(w) -   \widetilde{\times}_{\Re, R, \eps} (z,w)  \big|  \\
  & \leq 2R^2 \cdot \!
         \Big(
           \Big| [\Re( \tfrac{z+w}{2R} )]^2 - \Phi_{3,\frac{\eps}{6R^2}} ( \tfrac{z+w}{2R} )  \Big|
           \!+\! \Big| [\Re( \tfrac{z}{2R} )]^2 -  \Phi_{3,\frac{\eps}{6R^2}} ( \tfrac{z}{2R} ) \Big|
           \!+\! \Big| [\Re( \tfrac{w}{2R} )]^2 -  \Phi_{3,\frac{\eps}{6R^2}} ( \tfrac{w}{2R} )  \Big|
         \Big) \\
  &\le 2R^2 \cdot \big( \tfrac{\eps}{6R^2} + \tfrac{\eps}{6R^2} + \tfrac{\eps}{6R^2} \big)
  = \eps .
\end{align*}
According to \Cref{prop:RealSquared}, the function $\Phi_{R',\eps'}$
is implemented by a $\sigma$-network of depth and width
$\CalO \big( \ln( \tfrac{2}{\eps'} ) \big)$, with
$\CalO \big( \ln^2( \tfrac{2}{\eps'} ) \big)$ weights and neurons,
and all weights bounded by $\CalO ( (R')^6 (\eps')^{-7} ) )$.
Consequently, the function $\Phi_{3,\frac{\eps}{6R^2}}$
is implemented by a $\sigma$-network of depth and width
$\CalO \big( \ln( 12 R^2 \eps^{-1} ) \big)$, with
$\CalO \big( \ln^2( 12 R^2 \eps^{-1} ) \big)$ weights and neurons,
and all weights bounded by $\CalO ( R^{14} \eps^{-7} )$.
Note that $\widetilde{\times}_{\Re, R, \eps} (z,w)$ is a parallel connection
of three copies of $\Phi_{3,\frac{\eps}{6R^2}}$ with the adjustment
that all weights in the last layer are scaled by a factor of $2 R^2$,
and the first layer is composed with appropriate linear transforms.
Hence, the function $\widetilde{\times}_{\Re, R, \eps} (z,w)$ is again implemented by
a $\sigma$-network whose depth and width are $\CalO \big( \ln( R^2 \, \eps^{-1} ) \big)$,
whose number of weights and neurons are $\CalO \big( \ln^2( R^2 \, \eps^{-1} ) \big)$,
and whose weights are bounded by $\CalO \big( R^{16} \eps^{-7} \big)$ in absolute value.
\end{proof}

As a direct consequence of \Cref{prop:RealApprox},
we obtain an approximation for the complex product function
$\CC^2 \rightarrow \CC, (z,w) \mapsto z w$.

\begin{corollary}\label{cor:ProductApprox}
	Given $R \ge 3$ and $\varepsilon \in (0,1)$, there is a function
  $\widetilde{\times}_{R, \eps} \colon \mathbb{C}^2 \rightarrow \mathbb{C}$ such that
  \begin{enumerate}[itemsep=0.2cm,topsep=0.2cm]
		\item for any inputs $z,w \in \mathbb{C}$ with $|z|,|w| \le R$
          we have $| \widetilde{\times}_{R, \eps} (z,w) - zw| \le \varepsilon$;

    \item the function $\widetilde{\times}_{R, \eps}$ is implemented by a $\sigma$-network
          of depth and width $\CalO \big( \ln( R^2 \, \eps^{-1} ) \big)$,
          with at most $\CalO \big( \ln^2( R^2 \, \eps^{-1} ) \big)$
          weights and neurons, and all weights bounded in absolute value by
          $\CalO \big( R^{16} \eps^{-7} \big)$.
	\end{enumerate}
\end{corollary}

\begin{proof}
  Noting that for $z, w \in \CC$,
    \begin{align*}
      zw
      &=  \Re(z)\Re(w) - \Im(z)\Im(w) + i \big( \Re(z)\Im(w) + \Im(z)\Re(w) \big) \\
      &= \Re(z)\Re(w) - \Re(-iz)\Re(-iw) + i \big( \Re(z)\Re(-iw) + \Re(-iz)\Re(w) \big) ,
    \end{align*}
  we define
  \[
    \widetilde{\times}_{R, \eps} (z,w)
    := \widetilde{\times}_{\Re, R, \frac{\eps}{4}} (z,w)
       - \widetilde{\times}_{\Re, R, \frac{\eps}{4}} (-iz,-iw)
       + i \big(
             \widetilde{\times}_{\Re, R, \frac{\eps}{4}} (z,-iw)
             + \widetilde{\times}_{\Re, R, \frac{\eps}{4}} (-iz,w)
           \big)
    .
  \]
  It then follows from \Cref{prop:RealApprox} that
  $| \widetilde{\times}_{R, \eps} (z,w) - zw| \leq  \varepsilon$
  for all $z,w \in \CC$ with $|z|,|w| \leq R$.
  The function $\widetilde{\times}_{R, \eps} (z,w)$ is a sum
  of four equivalent copies of $\widetilde{\times}_{\Re, R, \frac{\eps}{4}}$
  and therefore is again implemented by a $\sigma$-network
  whose depth and width are $\CalO \big( \ln( R^2 \, \eps^{-1} ) \big)$,
  whose number of weights and neurons are $\CalO \big( \ln^2( R^2 \, \eps^{-1} ) \big)$,
  and whose weights are bounded by $\CalO \big( R^{16} \eps^{-7} \big)$ in absolute value.
\end{proof}

\section{Partition of unity}
\label{sec:PartitionOfUnity}

%%%%%%%%%%%%%%%%%%%%%%%%%%%%%%%%%%%%%%%%%%%%%%%%%%%%%%%%%%%%
%\input{PartitionOfUnity.tex}		%!TEX root=./Draft.tex

Define the functions $\psi^{\Re},\psi^{\Im}:\CC\to\CC$ by
\begin{equation}
  \psi^{\Re}(z)
  := 1-\sigma(z+\tfrac{1}{2})+\sigma(z-\tfrac{1}{2})
  \quad \text{and} \quad
  \psi^{\Im}(z)
  := 1+i\, \sigma(z+\tfrac{1}{2}i)-i\, \sigma(z-\tfrac{1}{2}i)
  .
  \label{eq:PartitionOfUnityBuildingBlocks}
\end{equation}
Note for $x\in\R$ that
\[
  \psi^{\Re}(x)
  =\begin{cases}
    1                 & \text{if } |x| \leq \frac{1}{2}                \\
    \frac{3}{2} - |x| & \text{if } \frac{1}{2} \leq |x| \leq \frac{3}{2} \\
    0                 & \text{if } |x| \geq \frac{3}{2}
  \end{cases}
\]
and for $z \in \CC$ that $\psi^{\Im} (i z) = \psi^{\Re}(z)$,
since $\sigma(iz) = i \, \sigma(z)$.

\begin{figure}[ht]
  \begin{center}
    \includegraphics[width=0.45\textwidth]{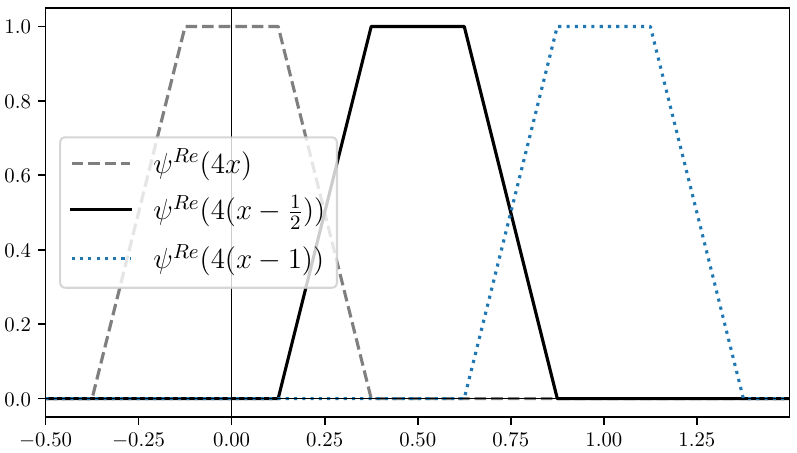}
  \end{center}
  %\captionsetup{width=.8\linewidth,font=small}
  \caption{\label{fig:PartitionOfUnityPlot}
    A plot of the function $\psi^{\Re}(4 \bullet)$ and its shifts,
    showing that they form a partition of unity.}
\end{figure}

Let $N \geq 1$ be a natural number.
For $m \in \{ 0, 1, \dots, 2N \}$ define the functions $\phi_{m,N}^{\Re} : \CC \to \CC$
by $\phi_{m,N}^{\Re} (z) := \psi^{\Re}(4N(z-\frac{m}{2N}))$.
It is not difficult to see that the $\phi_{m,N}^{\Re}$ ($m \in \{ 0,1,\dots,2N \}$)
form a partition of unity on the unit interval $[0,1] \subset \R \subset \CC$,
and that
\[
  \supp \big( \phi_{m,N}^{\Re}|_{\R} \big)
  \subset \big\{
            x \in \R
            \colon
            |x - \tfrac{m}{2 N}|
            \leq \tfrac{3}{8} N^{-1}
          \big\}
  ;
\]
see \Cref{fig:PartitionOfUnityPlot}.
Similarly, defining $\phi_{m,N}^{\Im} (z) := \psi^{\Im}(4N (z - \frac{i m}{2N}))$
for $m \in \{ 0, 1, \dots, 2N \}$, we see that the $\phi_{m,N}^{\Im}$
form a partition of unity on the imaginary unit interval $i \cdot [0,1] \subset \CC$.

\section{Main result}
\label{sec:MainResult}

%%%%%%%%%%%%%%%%%%%%%%%%%%%%%%%%%%%%%%%%%%%%%%%%%%%%%%%%%%%%
%\input{MainResult.tex}		%!TEX root=./Draft.tex

In this section, we prove our main result, \Cref{thm:IntroductionMainTheorem}.
As a preparation for the proof, we collect the following technical lemma,
whose proof is deferred to \Cref{sub:TechnicalMultiplicationResultProof}.

\begin{lemma}\label{lem:LipschitzLemma}
  Let $\Omega \neq \emptyset$ be a set, $M \in \N$, $\eps \in (0, \frac{1}{M+1})$,
  and $0 < \delta \leq \eps^2$.
  Suppose that
  \begin{itemize}[itemsep=0.2cm,topsep=0.2cm]
    \item $\approxMult : \CC^2 \to \CC$ satisfies $|\approxMult (z,w) - z w| \leq \eps$
          for all $|z|, |w| \leq 4$;

    \item $\alpha_1,\dots,\alpha_M : \Omega \to \CC$ satisfy $|\alpha_j (z)| \leq 1$
          for all $z \in \Omega$;

    \item $\beta_1,\dots,\beta_M : \Omega \to \CC$ satisfy $|\alpha_j(z) - \beta_j(z)| \leq \delta$
          for all $z \in \Omega$.
  \end{itemize}
  Define inductively $\gamma_1(z) := \beta_1(z)$
  and $\gamma_{j+1}(z) := \approxMult\bigl(\beta_{j+1}(z), \gamma_j(z)\bigr)$ for $z \in \Omega$.
  Then
  \[
    \Bigl|\gamma_M (z) - \prod_{\ell=1}^M \alpha_\ell (z)\Bigr|
    \leq 3M \, \eps
    \qquad \forall \, z \in \Omega .
  \]
\end{lemma}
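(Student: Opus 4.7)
The plan is to proceed by induction on $j$, proving simultaneously that the partial product $\gamma_j(z)$ stays in the region where $\approxMult$ is a valid approximation of multiplication, and that the accumulated error satisfies $e_j(z) := |\gamma_j(z) - \prod_{\ell=1}^j \alpha_\ell(z)| \leq 3j\eps$ for $1 \leq j \leq M$. Fix $z \in \Omega$ (we suppress it below). Note at the outset that $|\beta_j| \leq |\alpha_j| + \delta \leq 1 + \eps^2 \leq 2$ for every $j$, and that $|\prod_{\ell=1}^j \alpha_\ell| \leq 1$.

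The base case $j=1$ is immediate: $e_1 = |\beta_1 - \alpha_1| \leq \delta \leq \eps^2 \leq \eps$. For the inductive step, assume $e_j \leq 3j\eps$. First I verify that $\approxMult$ may be legitimately applied to $(\beta_{j+1}, \gamma_j)$: by the induction hypothesis and $j\eps \leq M\eps < M/(M+1) < 1$ we have
\[
  |\gamma_j| \leq 1 + e_j \leq 1 + 3j\eps \leq 4 ,
\]
and $|\beta_{j+1}| \leq 2 \leq 4$ as noted above. Hence
\[
  |\approxMult(\beta_{j+1}, \gamma_j) - \beta_{j+1}\gamma_j| \leq \eps.
\]

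Next, writing $P_j := \prod_{\ell=1}^j \alpha_\ell$, I split the error via the identity
\[
  \gamma_{j+1} - P_{j+1}
  = \bigl(\approxMult(\beta_{j+1}, \gamma_j) - \beta_{j+1}\gamma_j\bigr)
    + \beta_{j+1}(\gamma_j - P_j)
    + (\beta_{j+1} - \alpha_{j+1}) P_j .
\]
Using the three bounds $\eps$, $|\beta_{j+1}| \leq 1+\delta$, and $|P_j| \leq 1$, together with $|\beta_{j+1}-\alpha_{j+1}| \leq \delta$, I obtain the recursion
\[
  e_{j+1} \leq \eps + (1+\delta)\, e_j + \delta \leq \eps + (1+\eps^2)\cdot 3j\eps + \eps^2 .
\]
To close the induction I need $\eps + 3j\eps^3 + \eps^2 \leq 3\eps$, equivalently $3j\eps^2 + \eps \leq 2$. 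Since $j \leq M$ and $\eps < 1/(M+1)$, this reduces to $(4M+1)/(M+1)^2 \leq 2$, which holds for every $M \geq 1$. Thus $e_{j+1} \leq 3(j+1)\eps$, completing the induction and hence the lemma for $j = M$.

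The only non-mechanical point is the interlocking nature of the two inductive claims: to invoke the multiplication approximation at step $j+1$ one must already know that $\gamma_j$ stays inside the disk of radius $4$, and this requires the error estimate $e_j \leq 3j\eps$; conversely, propagating the error estimate uses the approximation guarantee on $\approxMult$. The hypothesis $\eps < 1/(M+1)$ is calibrated precisely to make both bootstraps go through for the full range $j = 1,\dots,M$; the hypothesis $\delta \leq \eps^2$ is what absorbs the multiplicative factor $(1+\delta)$ in the recursion without degrading the linear rate in $M$.
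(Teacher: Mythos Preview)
Your proof is correct and follows essentially the same approach as the paper's: both argue by induction, use the identical three-term splitting of $\gamma_{j+1}-P_{j+1}$, and rely on the bound $|\gamma_j|\leq 4$ (ensured by the induction hypothesis together with $\eps<\tfrac{1}{M+1}$) to invoke the approximation property of $\approxMult$. The only cosmetic difference is that the paper tracks the exact solution $\kappa_j=\eps\sum_{\ell=1}^j(1+\eps)^\ell$ of the error recursion and bounds it by $3M\eps$ at the end, whereas you guess the linear bound $3j\eps$ up front and verify the recursion closes numerically.
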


\begin{proof}[Proof of \Cref{thm:IntroductionMainTheorem}]
As in \Cref{sub:SmoothnessAssumptions}, we identify the function $g : Q_{\CC^d} \to \CC$
with the pair of functions $g_{\Re},g_{\Im} : [0,1]^{2d} \to \R$
and we will only explicitly show the approximation of
$f := g_{\Re}$, since $g_{\Im}$ can be approximated in exactly the same way.

We roughly follow the structure of the proof of Theorem~1 in \cite{YarotskyReLUBounds}:
In the first step, we approximate $f$ by $f_*$, a sum of Taylor polynomials
subordinate to a partition of unity, constructed with our activation function $\sigma$
in mind; see \Cref{sec:PartitionOfUnity}.
In the second step we approximate $f_*$ by the realization $\widetilde{f}$
of a $\sigma$-network of an appropriate architecture.
An additional complication compared to the real setting considered in \cite{YarotskyReLUBounds}
is that we cannot access the real and imaginary parts of the inputs of $f$ exactly
with a $\sigma$ network, but only approximatively; see \Cref{prop:RealImaginary}.

\medskip{}

\textbf{Step~1.} 
Employing similar notations to \cite{YarotskyReLUBounds},
we will denote ordered pairs (vectors) of coordinates by bold-faced characters.
Given $N \in \N$ (specified precisely in \Cref{eq:NDefinition} below), let us write
\[
  \overline{N}
  := \{0,1,\dots,2N\}^d \times \{0,1,\dots,2N\}^d .
\]
For $\m:=(m_1,m_2,\dots,m_{2d})\in \overline{N}$, we define on $Q_{\CC^d} \cong [0,1]^{2d}$
the function
\[
  \phi_{\m}(\x)
  = \phi_{N,\m}(\x)
  = \prod_{k=1}^d
      \psi^{\Re}\Bigl(4N \cdot \bigl(x_k - \tfrac{m_k}{2N}\bigr)\Bigr)
    \prod_{\ell=d+1}^{2d}
      \psi^{\Im}\Bigl(4N \cdot \bigl(x_\ell \, i - \tfrac{i m_\ell}{2N}\bigr)\Bigr),
\]
where $\x = (x_1,\dots,x_d,x_{d+1},\dots,x_{2d})$ and where $\psi^{\Re},\psi^{\Im}$
are given by \Cref{eq:PartitionOfUnityBuildingBlocks}.

Based on the observations in \Cref{sec:PartitionOfUnity}, we see that
the $\phi_{\m}$ ($\m \in \overline{N}$) form a partition of unity on $[0,1]^{2 d}$ and
satisfy $\supp (\phi_{\m}) \subset S_{\m}$ for the set
\[
  S_{\m}
  := \Big\{
       \x \in \R^{2 d}
       :
       |x_k-\tfrac{m_k}{2N}| < \tfrac{1}{2N}
       \text{ and }
       |x_\ell - \tfrac{m_\ell}{2N}| < \tfrac{1}{2N}
       \text{ for all } 1\leq k\leq d < \ell\leq 2d
     \Big\}.
\]

Now for any $\m\in\overline{N}$, consider the Taylor polynomial
of $f$ at the point $\x=\frac{\m}{2N}$ of degree $n-1$, given by
\[
  P_{\m}(\x)
  = \sum_{\n \in \N_0^{2d}, |\n| < n}
      \bigg[
        \frac{\partial^{\n} f (\frac{\m}{2 N})}{\n !}
        \cdot \bigg(\x - \frac{\m}{2N} \bigg)^{\n}
      \bigg] ,
  \qquad \text{and define} \qquad
  f_*
  := \sum_{\m\in\overline{N}}
       \phi_{\m} P_{\m}
  .
\]

For any $\x \in [0,1]^{2 d}$, we can bound the error by
\begin{align*}
  |f(\x)-f_*(\x)|
  & = \Big|
       \sum_{\m \in \overline{N}}
         \phi_{\m}(\x)
         \bigl(f(\x)-P_{\m}(\x)\bigr)
      \Big|
    \leq \sum_{\m:\x\in S_{\m}}
           \bigl|f(\x) - P_{\m}(\x)\bigr|
           \phi_{\m} (\x) \\
  & \leq \max_{\m:\x\in S_{\m}}
           \bigl| f(\x) - P_{\m}(\x) \bigr|
         \sum_{\m \in \overline{N}} \phi_{\m}(\x)
    \leq \frac{(2d)^n}{n!}
         \bigg(\frac{1}{N}\bigg)^n
         \|f\|_{\mathcal{W}^{n,\infty}} \\
  & \leq \frac{(2d)^n}{n!}
         \bigg(\frac{1}{N}\bigg)^n,
\end{align*}
where, similar to the arguments on Page~108 of \cite{YarotskyReLUBounds},
we used successively the fact that the $\phi_{\m}$ form a partition of unity
and are supported on $S_{\m}$, a standard bound for the error of approximation by the
Taylor polynomial (see e.g.\ the proof of \cite[Lemma~A.8]{PetersenOptimalApproximation}),
and finally that $f$ is in the unit ball of the Sobolev space,
meaning $\| f \|_{\CalW^{n,\infty}} \leq 1$.
Therefore by choosing
\begin{equation}
  N
  := \bigg\lceil
       \bigg(
         \frac{n!\cdot\eps}{2 \cdot (2 d)^n}
       \bigg)^{-1/n}
     \bigg\rceil
  \label{eq:NDefinition}
\end{equation}
(where $\lceil x\rceil$ is the smallest integer bigger or equal to $x$),
we obtain that $\|f - f_*\|_{L^\infty} \leq \frac{\eps}{2}$.

\medskip{}

\textbf{Step~2.}
We approximate $f_\ast$ up to error $\frac{\eps}{2}$ by a $\sigma$-network.
To this end, note that we can rewrite $f_*$ as
\[
  f_*(\x)
  = \sum_{\m \in \overline{N}} \,\,
      \sum_{\n \in \N_0^{2 d} : |\n| < n}
          a_{\m,\n}
          \cdot \phi_{\m}(\x)
          \cdot \bigl(\x - \tfrac{\m}{2N}\bigr)^{\n}
  .
\]
Note that all the coefficients $a_{\m,\n}$ have absolute value at most $1$,
since $\|f\|_{\CalW^{n,\infty}} \leq 1$.
Therefore $f_*$ is a linear combination of no more than $n^{2 d} \, (2N+1)^{2d}$ terms of the form
\[
  f_{\m,\n} (\x)
  := \phi_{\m}(\x)
     \cdot \bigl(\x - \tfrac{\m}{2N}\bigr)^{\n}
  .
\]

Fix $\m \in \overline{N}$ and $\n \in \N_0^{2 d}$ with $|\n| < n$ for the moment.
We want to approximate the function $f_{\m,\n}$ via \Cref{lem:LipschitzLemma}.
Thus, set $S := n^{2 d} \cdot |\overline{N}|$ and $M := 2 d + |\n| < 2d + n$, and
$\widetilde{\eps} := \frac{\eps}{6 (2d + n) \cdot S}$, as well as $\delta := \widetilde{\eps}^2$,
and finally
\(
  \Omega
  := Q_{\CC^d}
  = \bigl\{
       \z \in \CC^d
       \colon
       \Re(z_j), \Im(z_j) \in [0,1]
       \text{ for } j \in \{ 1,\dots,d \}
     \bigr\}
  .
\)

As a first step, we estimate $\widetilde{\eps}$.
Directly from \Cref{eq:NDefinition}, we see
\[
  N
  \leq 1
       + \Big(
           \frac{2 \cdot (2 d)^n}{n! \cdot \eps}
         \Big)^{1/n}
  \leq 1 + 4 d \cdot \eps^{-1}
  \leq 5 d \cdot \eps^{-1} .
\]
Thus, $S = n^{2 d} \, (2N+1)^{2d} \leq (12 d n)^{2 d} \cdot \eps^{-2d}$,
whence ${\widetilde{\eps}^{-1} \leq C_1(d,n) \cdot \eps^{-2d-1} \leq C_1(d,n) \cdot \eps^{-3d}}$.
Therefore,
$\ln(2/\widetilde{\eps}) \leq \ln(2 C_1(d,n)) + 3d \, \ln(1/\eps) \leq C_2(d,n) \cdot \ln(2/\eps)$
for suitable constants $C_1(d,n) \geq 1$ and $C_2(d,n) \geq 1$.

Thus, \Cref{cor:ProductApprox} (applied with $\widetilde{\eps}$ instead of $\eps$) yields a function
$\approxMult : \CC^2 \to \CC$ satisfying $|\approxMult(z,w) - z \, w| \leq \widetilde{\eps}$
for all $z,w \in \CC$ with $|z|, |w| \leq 4$, and such that $\approxMult$ is implemented
by a $\sigma$-network with width and depth bounded by
$C_3 \ln(2/\widetilde{\eps}) \leq C_4 \cdot \ln(2/\eps)$
and at most $C_3 \cdot \ln^2 (2 / \widetilde{\eps}) \leq C_4 \cdot \ln^2 (2/\eps)$ weights,
each bounded in absolute value by $C_3 \cdot \widetilde{\eps}^{-7} \leq C_4 \cdot \eps^{-21 d}$.
Here, $C_3 \geq 1$ is an absolute constant and $C_4 = C_4(d,n) \geq 1$.

Next, note that $\frac{8N}{\widetilde{\eps}^2} \leq C_5 (d,n) \cdot \eps^{-7d}$.
Therefore, \Cref{prop:RealImaginary} shows that there exist functions
$\Re^\natural, \Im^\natural : \CC \to \CC$
with ${|\Re^{\natural}(z) - \Re (z)| \leq \frac{\widetilde{\eps}^2}{8N}}$
and $|\Im^{\natural}(z) - \Im(z)| \leq \frac{\widetilde{\eps}^2}{8N}$
for all $z \in \CC$ with $|z| \leq 4$,
and such that $\Re^{\natural}$ and $\Im^{\natural}$ are implemented by shallow $\sigma$-networks
with $10$ weights of magnitude at most
$C_5 \, (\frac{\widetilde{\eps}^2}{8N})^{-3} \leq C_6 \cdot \eps^{-21 d}$,
for suitable $C_6 = C_6(d,n) \geq 1$.

Finally, to apply \Cref{lem:LipschitzLemma}, writing $\n = (n_1,\dots,n_{2d})$,
we define ${\alpha_k, \beta_k : \CC^d \to \CC}$ for ${1 \leq k \leq 2d+|\n| = M}$ as follows:
\begin{itemize}
  \item For $1\leq k\leq d$, set
        \[
          \alpha_k(\z) := \psi^{\Re}\bigl(4N \Re(z_k) - 2 m_k \bigr)
          \quad \text{ and } \quad
          \beta_k(\z) := \psi^{\Re}\bigl(4N \Re^\natural (z_k) - 2 m_k \bigr);
        \]

  \item For $d+1\leq k\leq 2d$, set
        \[
          \alpha_k(\z) := \psi^{\Im}\bigl( 4N i \Im (z_{k-d}) - 2 i m_k \bigr)
          \quad \text{ and } \quad
          \beta_k(\z) := \psi^{\Im}\bigl(4N i \Im^\natural (z_{k-d}) - 2 i m_k \bigr);
        \]

  \item For $2d+n_1+\dots+n_{\ell-1} < k\leq 2d+n_1+\dots+n_{\ell}\leq 2d+n_1+\dots+n_d$
        ($1 \leq \ell \leq d$), set
        \[
          \alpha_k(\z) := \Re(z_\ell) - \frac{m_\ell}{2N}
          \quad \text{ and } \quad
          \beta_k(\z) := \Re^\natural (z_\ell) - \frac{m_\ell}{2N};
        \]

  \item For $2d+n_1+\dots+n_d  \leq  2d+n_1+\dots+n_{\ell-1}<k\leq 2d+n_1+\dots+n_\ell$
        ($d+1 \leq \ell \leq 2d$), set
        \[
          \alpha_k(\z) := \Im(z_{\ell-d}) - \frac{ m_{\ell} }{2N}
          \quad \text{ and } \quad
          \beta_k(\z) := \Im^\natural (z_{\ell-d}) - \frac{ m_{\ell} }{2N}.
        \]
\end{itemize}
Since $\psi^{\Re},\psi^{\Im}$ are $2$-Lipschitz (this follows from the definition of
$\psi^{\Re},\psi^{\Im}$ and from \Cref{lem:SigmaLipschitz}), we see that
$|\alpha_k(\z) - \beta_k(\z)| \leq \widetilde{\eps}^2 = \delta \leq 1$
for all $\z \in \Omega$ and $1 \leq k \leq M$.
Furthermore, note that indeed $|\alpha_k(\z)| \leq 1$
(and hence $|\beta_k (\z)| \leq 2$) for all $\z \in \Omega$ and $1 \leq k \leq M$.

Overall, we can thus apply \Cref{lem:LipschitzLemma}, which shows for
\[
  \widetilde{f}_{\m,\n}
  := \approxMult
     \big(
       \beta_1,
       \approxMult \big(
         \beta_2,
         \dots,
         \approxMult(\beta_{2d + |\n| - 1}, \beta_{2d + |\n|})
       \big)
     \big)
\]
that
\[
  |f_{\m,\n}(\z) - \widetilde{f}_{\m,\n}(\z)|
  \leq 3 M \, \widetilde{\eps}
  \leq \frac{\eps}{2 S}
  = \frac{\eps}{2} \frac{1}{n^{2 d} |\overline{N}|}
  \qquad \forall \, \z \in \Omega = Q_{\CC^d},
\]
where we identify $\z = (z_1,\CompressedDots,z_d) \in Q_{\CC^d}$ with
$\bigl(\Re(z_1),\CompressedDots,\Re(z_d), \Im(z_1),\CompressedDots,\Im(z_d)\bigr) \in [0,1]^{2 d}$.

Thus, setting
\(
  \widetilde{f}
  := \sum_{\m \in \overline{N}} \,
       \sum_{\n \in \N_0^{2d}, |\n| < n}
         a_{\m,\n} \widetilde{f}_{\m,\n}
\)
and recalling that $|a_{\m,\n}| \leq 1$, we see for any $\z \in \Omega$ that
\[
  |f_\ast (\z) - \widetilde{f}(\z)|
  \leq \sum_{\m \in \overline{N}} \,
         \sum_{\n \in \N_0^{2d}, |\n| < n}
           |a_{\m,\n}| |f_{\m,\n}(\z) - \widetilde{f}_{\m,\n}(\z)|
  \leq |\overline{N}| \cdot n^{2 d} \cdot \frac{\eps}{2} \frac{1}{n^{2 d} |\overline{N}|}
  =    \frac{\eps}{2} ,
\]
and hence $|f(\z) - \widetilde{f}(\z)| \leq \eps$ for all $\z \in Q_{\CC^d}$,
thanks to the bound from Step~1.

\begin{figure}[htb]
\centering
\scalebox{0.65}{%
\begin{tikzpicture}
\begin{scope}%[every node/.style={rectangle,thick,draw}]
\node(L00) at (0,0) {$\z$};

\node(L1-1) at (-5,2) {$\Re^\natural z_1,\dots,\Re^\natural z_d$};
\draw [draw=black] (-5-1.7,2-.35) rectangle ++(3.4,.7);

\node(L11) at (5,2) {$\Im^\natural z_1,\dots,\Im^\natural z_d$};
\draw [draw=black] (5-1.7,2-.35) rectangle ++(3.4,.7);

\node(L2-2) at (-6,5) {$\beta_k(\z),k\in \N_1^d$};
\draw [draw=black] (-6-1.2,5-.35) rectangle ++(2.4,.7);

\node(L2-1) at (-3.3,5) {$\beta_k(\z),k\in \N_{d+1}^{2d}$};
\draw [draw=black] (-3.3-1.3,5-.35) rectangle ++(2.6,.7);

\node(L21) at (-.3,5) {$\beta_k(\z),k\in \N_{2d+1}^{2d+s}$};
\draw [draw=black] (-.3-1.4,5-.35) rectangle ++(2.8,.7);

\node(L22) at (3,5) {$\beta_k(\z),k\in \N_{2d+s+1}^{2d+|n|}$};
\draw [draw=black] (3-1.7,5-.35) rectangle ++(3.4,.7);
%\draw [draw=black] (3-1.7,5-.35) rectangle ++(3.4,.7);
%\node(L2-3) at (-3,6) {$\beta_d(\textbf{z})$};
%\node(L2-2) at (-1.6,6) {$\beta_{d+1}(\textbf{z})$};
%\node(L2-1) at (0.4,6) {$\beta_{2d}(\textbf{z})$};
%\node(L20) at (2,6) {$\beta_{2d+1}(\textbf{z})$};
%\node(L21) at (4,6) {$\beta_{2d+s}(\textbf{z})$};
%\node(L22) at (5.8,6) {$\beta_{2d+s+1}(\textbf{z})$};

% below box checked
\node(L3-2) at (-6,8) {$\beta_k(\z),k\in \N_1^d$};
\draw [draw=black] (-6-1.2,8-.35) rectangle ++(2.4,.7);

\node(L3-1) at (-3.3,8) {$\beta_k(\z),k\in \N_{d+1}^{2d}$};
\draw [draw=black] (-3.3-1.3,8-.35) rectangle ++(2.6,.7);

\node(L31) at (-.3,8) {$\beta_k(\z),k\in \N_{2d+1}^{2d+s}$};
\draw [draw=black] (-.3-1.4,8-.35) rectangle ++(2.8,.7);

\node(L32) at (3,8) {$\beta_k(\z),k\in \N_{2d+s+1}^{2d+|n|-2}$};
\draw [draw=black] (3-1.7,8-.35) rectangle ++(3.4,.7);

\node(L33) at (8,8) {$u(\z):=\widetilde{\times}(\beta_{2d+|n|-1}(\z),\beta_{2d+|n|}(\z))$};
\draw [draw=black] (8-2.9,8-.35) rectangle ++(5.8,.7);

\node(L4-2) at (-6,11) {$\beta_k(\z),k\in \N_1^d$};
\draw [draw=black] (-6-1.2,11-.35) rectangle ++(2.4,.7);

\node(L4-1) at (-3.3,11) {$\beta_k(\z),k\in \N_{d+1}^{2d}$};
\draw [draw=black] (-3.3-1.3,11-.35) rectangle ++(2.6,.7);

\node(L41) at (-.3,11) {$\beta_k(\z),k\in \N_{2d+1}^{2d+s}$};
\draw [draw=black] (-.3-1.4,11-.35) rectangle ++(2.8,.7);

\node(L42) at (3,11) {$\beta_k(\z),k\in \N_{2d+s+1}^{2d+|n|-3}$};
\draw [draw=black] (3-1.7,11-.35) rectangle ++(3.4,.7);

\node(L43) at (8,11) {$\widetilde{\times}(\beta_{2d+|n|-2}(\z),u(\z))$};
\draw [draw=black] (8-2.9,11-.35) rectangle ++(5.8,.7);

\node(L5-2) at (-6,14) {$  \vdots$};
\node(L5-1) at (-3.3,14) {$ \vdots$};
\node(L51) at (-.3,14) {$  \vdots$};
\node(L52) at (3,14) {$  \vdots$};
\node(L53) at (8,14) {$  \vdots$};

\node(L6-2) at (-6,17) {$\beta_1(\z)$};
\draw [draw=black] (-6-.5,17-.35) rectangle ++(1,.7);

\node(L60) at (5,17) {
                      \(
                        \widetilde{\times}(
                          \beta_2(\z),
                          \dots,
                          \widetilde{\times}(
                            \beta_{2d+|\n|-1}(\z),
                            \beta_{2d+|\n|}(\z)
                          )
                        )
                      \)
                     };
\draw [draw=black] (5-3.4,17-.35) rectangle ++(6.8,.7);

\node(L70) at (0,19) {$\widetilde{f}_{\m,\n}(\z)$};
\draw [draw=black] (0-.7,19-.35) rectangle ++(1.4,.7);
\end{scope}

\begin{scope}[every node/.style={fill=white,rectangle},
              every edge/.style={draw=blue, thick}]
  \path [->] (L00) edge node {$\Re^\natural$} (L1-1);
  \path [->] (L00) edge node {$\Im^\natural$} (L11);

  \path [->] (L1-1) edge node {$\psi^{\Re}(4N\square-2m)$} (L2-2);
  \path [->] (L1-1) edge node {$\square-\frac{m}{2N}$} (L21);
  \path [->] (L11) edge node {$\psi^{\Im}(4 i N \square - 2 i m)$} (L2-1);
  \path [->] (L11) edge node {$\square-\frac{m}{2N}$} (L22);

  \path [->] (L2-2) edge node {$\text{Id}_R$} (L3-2);
  \path [->] (L2-1) edge node {$\text{Id}_R$} (L3-1);
  \path [->] (L21) edge node {$\text{Id}_R$} (L31);
  \path [->] (L22) edge node {$\text{Id}_R$} (L32);
  \path [->] (L22) edge node {$\widetilde{\times}$} (L33);

  \path [->] (L3-2) edge node {$\text{Id}_R$} (L4-2);
  \path [->] (L3-1) edge node {$\text{Id}_R$} (L4-1);
  \path [->] (L31) edge node {$\text{Id}_R$} (L41);
  \path [->] (L32) edge node {$\text{Id}_R$} (L42);
  \path [->] (L32) edge node {$\widetilde{\times}$} (L43);
  \path [->] (L33) edge node {$\widetilde{\times}$} (L43);

  \path [->] (L4-2) edge node {$\text{Id}_R$} (L5-2);
  \path [->] (L4-1) edge node {$\text{Id}_R$} (L5-1);
  \path [->] (L41) edge node {$\text{Id}_R$} (L51);
  \path [->] (L42) edge node {$\text{Id}_R$} (L52);
  \path [->] (L42) edge node {$\widetilde{\times}$} (L53);
  \path [->] (L43) edge node {$\widetilde{\times}$} (L53);

  \path [->] (L5-2) edge node {$\text{Id}_R$} (L6-2);
  \path [->] (L5-2) edge node {$\widetilde{\times}$} (L60);
  \path [->] (L53) edge node {$\widetilde{\times}$} (L60);

  \path [->] (L6-2) edge node {$\widetilde{\times}$} (L70);
  \path [->] (L60) edge node {$\widetilde{\times}$} (L70);
\end{scope}
\end{tikzpicture}
}
\caption{Schematic of the architecture of the network implementing $\widetilde{f}_{\m,\n}$.
For brevity, the figure uses the notation $\N_k^\ell:=\{k,k+1,\dots,\ell-1,\ell\}$
and $s := n_1 + \cdots + n_d$ as well as $R := 2$.}
\label{fig:cartoonOfNEtwork}
\end{figure}

\medskip{}

\textbf{Step~3 (Size of the network):}
Note that $\widetilde{f}_{\m,\n}$ can be expressed as a composition of the networks
$\text{Id}_R$ (with $R = 2$) and $\Re^\natural,\Im^\natural, \psi^{\Re},\psi^{\Im}$,
as well as $\widetilde{\times}$ (see \Cref{fig:cartoonOfNEtwork})
and that the number of such subnetworks that appear in $\widetilde{f}_{\m,\n}$
depends only on the dimension $d$ and the degree of smoothness $n$.

Next, note that with the implied constants (potentially) depending on $d$ and $n$,
the following hold:

\begin{itemize}[leftmargin=0.7cm,itemsep=0.2cm]
  \item $\text{Id}_R$ (with $R = 2$) is implemented by a $\sigma$-network
        with $\CalO(1)$ weights and layers, and all weights bounded by $\CalO(1)$;

  \item $\Re^\natural$ and $\Im^\natural$ are implemented by $\sigma$-networks
        with $\mathcal{O}(1)$ weights and layers
        and all weights bounded by $\mathcal{O}(\eps^{-21d})$;

  \item $\psi^{\Re}$ and $\psi^{\Im}$ are implemented by $\sigma$-networks with
        $\mathcal{O}(1)$ weights and layers and all weights bounded by $\CalO(1)$;
        hence, $\psi^{\Re}(4N \bullet - 2 m)$ and $\psi^{\Im}(4 i N \bullet - 2 i m)$
        (for $m \in \{ 0,1,\dots,2N \}$) are implemented by $\sigma$-networks
        with $\CalO(1)$ weights and layers and all weights bounded by
        $\CalO(N) \subset \CalO(\eps^{-1})$;

  \item $\widetilde{\times}$ is implemented by a $\sigma$-network
        with depth and width bounded by $\mathcal{O}(\ln(2/\eps))$
        and $\mathcal{O}( \ln^2(2/\eps) )$ weights,
        bounded in absolute value by $\mathcal{O}(\eps^{-21d})$.
\end{itemize}

\smallskip{}
\noindent
This implies that $\widetilde{f}_{\m,\n}$ is implemented by a $\sigma$-network
$\Phi^{\eps}_{\m,\n}$ satisfying $W \! (\Phi^{\eps}_{\m,\n}) \!\in\! \CalO(\ln^2(2/\eps))$,
$B(\Phi^{\eps}_{\m,\n}),L(\Phi^{\eps}_{\m,\n}) \in \CalO(\ln(2/\eps))$,
and $\| \Phi^{\eps}_{\m,\n} \| \in \CalO(\eps^{-42 d})$,
where the implied constants (only) depend on $d,n$.
For the full details, we refer to
\Cref{sub:DNNCalculusComposition,sub:DNNCalculusAddition}.

Now, since $\widetilde{f}$ is a linear combination of the no more than $n^{2 d} (2N+1)^{2d}$
functions $\widetilde{f}_{\m,\n}$ with coefficients no larger in absolute value than $1$
and recalling that $N \lesssim_{d,n} \eps^{-1/n}$ (see \Cref{eq:NDefinition}),
it follows that for some $C = C(d,n) > 0$ independent of $\eps$ and $f$,
the function $\widetilde{f}$ is implemented by a $\sigma$-network with
no more than $C \cdot \ln(2/\eps)$ layers
and no more than $C\cdot\eps^{-2d/n} \cdot \ln^2(2/\eps)$ weights,
each bounded in absolute value by $C \cdot \eps^{-44 d}$.

\smallskip{}

Finally,  note that
\(
  \widetilde{f}
  = \sum_{\m \in \overline{N}}
      \sum_{\n \in \N_0^{2 d}, |\n| < n}
        a_{\m,\n} \widetilde{f}_{\m,\n}
\)
where only the coefficients $a_{\m,\n}$ depend on $f$, whereas the functions $\widetilde{f}_{\m,\n}$
are independent of $f$.
This easily implies that one can choose a fixed network architecture $\CalA$ (only depending
on $d,n,\eps$ but independent of $f$) with $L(\CalA) \leq C \cdot \ln(2/\eps)$
and $W(\CalA) \leq C \cdot \eps^{-2d/n} \cdot \ln^2(2/\eps)$ such that $\widetilde{f}$
is implemented by a $\sigma$-network $\Phi_{f}$ of architecture $\CalA$
and with $\| \Phi_f \| \leq C \cdot \eps^{-44 d}$.
\end{proof}

\section{Optimality}%
\label{sec:Optimality}

In this section, we show that the approximation rate obtained in
\Cref{thm:IntroductionMainTheorem} cannot be improved significantly.
Precisely, we prove the following result:

\begin{theorem}\label{thm:Optimality}
  Let $d,n \in \N$, $\theta,\kappa,\gamma \geq 0$, and $C_0 \geq 1$.
  Assume that for every $\eps \in (0,1)$ and every $g \in \CalF_{n,d}$
  there exists a function $g_\eps$ implemented by a $\sigma$-network
  with at most $C_0 \cdot (\ln(2/\eps))^{\kappa}$ layers
  and at most $C_0 \cdot \eps^{-\gamma}$ weights,
  all bounded in absolute value by $C_0 \cdot \eps^{-\theta}$
  satisfying $\| g - g_\eps \|_{L^\infty} \leq \frac{\eps}{2}$.

  Then necessarily $\gamma \geq 2d / n$.
\end{theorem}

The proof idea consists in showing that the set of $\sigma$-networks
of a given complexity satisfies certain \emph{entropy bounds}.
If the approximation rate from \Cref{thm:IntroductionMainTheorem} could be
strictly improved, this would then imply entropy bounds for the set
$\CalF_{n,d}$ that contradict the known asymptotics of the
entropy numbers of $\CalF_{n,d}$ \cite{ClementsEntropies}.

We will derive the entropy bounds for the network sets
as a consequence of the following Lipschitz bound
for the realization map $\Phi \mapsto R_\sigma \Phi$.
Since this bound is quite similar to the one in
\cite[Theorem~2.6]{BlackScholesGeneralizationError}---%
although there only \emph{real-valued} networks with the ReLU activation function
are considered---we postpone the proof to \Cref{sub:RealizationLipschitzBoundProof}.

\begin{lemma}\label{lem:RealizationLipschitzBound}
  Given two networks $\Phi = \big( (A_\ell, b_\ell) \big)_{\ell=1}^L$
  and $\Psi = \big( (B_\ell, c_\ell) \big)_{\ell=1}^L$
  such that for each $\ell \in \{ 1,\dots,L \}$, the matrices
  $A_\ell,B_\ell$ and vectors $b_\ell,c_\ell$ have the same dimension,
  define $\Phi-\Psi$ to be the network $\big( (A_\ell-B_\ell,b_\ell-c_\ell)\big)_{\ell=1}^L$. 

  Let $R,R_0 \geq 1$ and assume $\| \Phi \|, \| \Psi \| \leq R$.
  Then, for every $z \in \CC^{N_0}$ with $\| z \|_{\ell^\infty} \leq R_0$, we have
  \[
    |R_\sigma \Phi (z) - R_\sigma \Psi (z)|
    \leq R_0 \cdot N_1 \cdots N_L \cdot 4^L R^{L-1} \cdot \| \Phi - \Psi \| .
  \]
\end{lemma}

As the final preparation for the proof of \Cref{thm:Optimality},
we recall the notion of covering numbers and a few related facts.
Given a non-empty subset $\emptyset \neq M \subset X$ of a metric space $(X,d)$,
the \emph{covering number} $\Covering(M,\eps) = \Covering_X (M,\eps) \in \N \cup \{ \infty \}$
is the minimal number $n \in \N$ of elements $m_1,\dots,m_n \in M$ satisfying
$M \subset \bigcup_{i=1}^n \ClosedBall_\eps(m_i)$, where
$\ClosedBall_\eps (m) = \{ x \in X \colon d(x,m) \leq \eps \}$
is the closed ball of radius $\eps$ around $m$.

It follows directly from the definitions that if $F : M \subset X \to Y$ is Lipschitz continuous
with $\Lip(F) \leq L$ for some $L > 0$, then
\begin{equation}
  \Covering_{Y} (F(M), \eps)
  \leq \Covering_X (M, \eps/L)
  \label{eq:LipschitzMapCoveringNumberEstimate}
\end{equation}
and that
\begin{equation}
  \Covering \bigl( \textstyle\bigcup_{j=1}^n M_j , \eps\bigr)
  \leq \sum_{j=1}^n
         \Covering(M_j, \eps) .
  \label{eq:CoveringNumberUnionEstimate}
\end{equation}

Using the identification $Q_{\CC^d} \cong [0,1]^{2d}$,
the following bound for the covering numbers of the set $\CalF_{n,d}$
(considered as a subset of $C(Q_{\CC^d})$ with the sup-norm)
is an easy consequence of \cite[Theorem~3 and Theorem on Page~1086]{ClementsEntropies}:

\begin{lemma}\label{lem:FunctionClassCoveringNumbers}
  For $d,n \in \N$ there exists a constant $C_1 = C_1(d,n) > 0$ satisfying
  \[
    \ln \bigl(\Covering_{C(Q_{\CC^d})}(\CalF_{n,d}, \eps)\bigr)
    \geq C_1 \cdot \eps^{-2d/n}
    \qquad \forall \, \eps \in (0,1] .
  \]
\end{lemma}

Furthermore, we will use the following bound for the covering numbers
of subsets of $\R^n$, taken from \cite[Lemma~2.7]{BlackScholesGeneralizationError}:

\begin{lemma}\label{lem:FiniteDimensionCoveringBound}
  Let $n \in \N$, $R \in [1,\infty)$ and $\eps \in (0,e^{-1})$.
  Then, using the $\| \cdot \|_{\ell^\infty}$-norm on $\R^n$, we have
  \[
    \Covering \bigl([-R,R]^n ,\eps\bigr)
    \leq \exp \bigl(n \cdot \ln( \lceil R/\eps \rceil ) \bigr)
    \leq \exp \bigl(2 n \cdot \ln(R/\eps) \bigr)
    =    (R/\eps)^{2 n} .
  \]
\end{lemma}

Using these preparations, we can finally prove \Cref{thm:Optimality}.

\begin{proof}[Proof of \Cref{thm:Optimality}]
  \textbf{Step~1:}
  Given $\eps \in (0,e^{-1})$, set
  $W_\eps := \big\lfloor C_0 \cdot \bigl(\frac{2}{\eps}\bigr)^{\gamma} \big\rfloor$
  and $R_\eps := C_0 \cdot (\frac{2}{\eps})^{\theta}$,
  as well as $L_\eps := \big\lfloor C_0 \cdot \big( \ln(\frac{4}{\eps}) \big)^{\kappa} \big\rfloor$.
  Finally, let
  \[
    \NNSet_\eps
    := \big\{
         R_\sigma \Phi |_{Q_{\CC^d}}
         \,\,\,\colon\,\,\,
         \din (\Phi) = d,
         \dout(\Phi) = 1,
         W(\Phi) \leq W_\eps,
         L(\Phi) \leq L_\eps
         \text{ and }
         \| \Phi \| \leq R_\eps
       \big\} .
  \]
  In this step, we show that
  \begin{equation}
    \ln
    \big(
      \Covering_{C(Q_{\CC^d})}(\NNSet_\eps, \eps/2)
    \big)
    \leq C_1 \cdot \bigl(\ln(2/\eps)\bigr)^{1 + 2 \kappa} \cdot \eps^{-\gamma}
    \label{eq:NetworkSetEntropyBound}
  \end{equation}
  for a suitable constant $C_1 = C_1(d,C_0,\kappa,\gamma,\theta) > 0$ independent of $\eps$.

  To see this, let us write $\FirstN{n} := \{ 1,\dots,n \}$ for $n \in \N$.
  Furthermore, given $L \in \FirstN{L_\eps}$ and $\boldN = (N_1,\dots,N_L) \in \FirstN{W_\eps}^L$
  with $N_L = 1$ and given $\J = (J_1,\dots,J_L)$
  with $J_\ell \subset \FirstN{N_\ell} \times \FirstN{N_{\ell-1}}$ and $|J_\ell| \leq W_\eps$,
  define
  \begin{align*}
    \Lambda_{\boldN,\J} : \quad
    & \prod_{\ell=1}^L
      \Big(
        [-R_\eps,R_\eps]^{J_\ell} \times [-R_\eps,R_\eps]^{N_\ell}
        \times [-R_\eps,R_\eps]^{J_\ell} \times [-R_\eps,R_\eps]^{N_\ell}
      \Big)
      \to C(Q_{\CC^d}), \quad \\
    & \Big(
        \bigl(A^{(\ell)}_{j,k}\bigr)_{(j,k) \in J_\ell},
        b^{(\ell)},
        \bigl(B_{j,k}^{(\ell)}\bigr)_{(j,k) \in J_\ell},
        c^{(\ell)}
      \Big)_{\ell=1}^L
      \mapsto R_\sigma
              \Big(
                \bigl(
                  A^{(\ell)} + i B^{(\ell)},
                  b^{(\ell)} + i c^{(\ell)}
                \bigr)_{\ell=1}^L
              \Big) ,
  \end{align*}
  where $A^{(\ell)}_{j,k} = B^{(\ell)}_{j,k} = 0$ for
  $(j,k) \in (\, \FirstN{N_\ell} \times \FirstN{N_{\ell-1}} \,) \setminus J_\ell$.

  \smallskip{}

  We first claim that $\NNSet_\eps \subset \bigcup_{\boldN,\J} \mathrm{Im}(\Lambda_{\boldN,\J})$,
  where the union is taken over all $\boldN,\J$ as above.
  To see this, note for $f \in \NNSet_\eps$ that $f = R_\sigma \Phi$ for a network
  $\Phi = \smash{\big( (A_\ell,b_\ell) \big)_{\ell=1,\dots,L}}$ that satisfies
  $L \leq L_\eps$, $\sum_{j=1}^L (\| A_j \|_{\ell^0} + \| b_j \|_{\ell^0}) = W(\Phi) \leq W_\eps$,
  and $\| A_\ell \|_{\infty}, \| b_\ell \|_{\infty} \leq R_\eps$.
  Since $\sigma(0) = 0$, it is easy to see simply by dropping ``dead neurons''
  (i.e., neurons that always compute the value $0$, independent of the network input)
  that one can assume $A_\ell \in \CC^{N_\ell \times N_{\ell-1}}$ where
  $N_\ell \leq W_\eps$ for $\ell \in \FirstN{L}$.
  Furthermore, the condition on the number of weights shows that for every
  $\ell \in \FirstN{L}$, one can choose a set
  $J_\ell \subset \FirstN{N_\ell} \times \FirstN{N_{\ell-1}}\strut$
  satisfying $|J_\ell| \leq W_\eps$ and such that $(A_\ell)_{j,k} = 0$ unless $(j,k) \in J_\ell$.
  This easily implies $\strut f = R_\sigma \Phi \in \mathrm{Im}(\Lambda_{\boldN,\J})$.

  Next, note for fixed $L \in \FirstN{L_\eps}$ and $\boldN \in \FirstN{W_\eps}^{L}$ that
  $N_\ell \leq d W_\eps$ (even for $\ell = 0$) and hence
  \begin{equation}
    \begin{split}
      \big|
        \big\{
          J_\ell \subset \FirstN{N_\ell} \times \FirstN{N_{\ell-1}}
          \colon
          |J_\ell| \leq W_\eps
        \big\}
      \big|
      & = \sum_{t=0}^{\min\{ W_\eps, N_\ell N_{\ell-1} \}}
            \binom{N_\ell N_{\ell-1}}{t} \\
      & \overset{(\ast)}{\leq} \bigl(
               e N_\ell N_{\ell-1} \big/ \min\{ W_\eps, N_\ell N_{\ell-1} \}
             \bigr)^{\min\{ W_\eps, N_\ell N_{\ell-1} \}} \\
      & \leq \bigl(d^2 e W_\eps^2 \big/ W_\eps\bigr)^{\min \{ W_\eps, N_\ell N_{\ell-1} \}}
        \leq \bigl(d^2 e W_\eps \bigr)^{W_\eps} .
    \end{split}
    \label{eq:NumberOfSubsetsBound}
  \end{equation}
  Here, the step marked with $(\ast)$ used the elementary bound
  $\sum_{t=0}^m \binom{n}{t} \leq (e n / m)^m$ which is valid for $1 \leq m \leq n$;
  see e.g.~\cite[Exercise~0.0.5]{VershyninHighDimensionalProbablity}.
  
  As the next step, note that \Cref{lem:RealizationLipschitzBound}
  shows that if we equip the domain of $\Lambda_{\boldN,\J}$
  with the $\| \bullet \|_{\ell^\infty}$ norm, then
  $\Lambda_{\boldN,\J}$ is Lipschitz continuous with Lipschitz constant
  \[
    \Lip(\Lambda_{\boldN,\J})
    \leq 2 \cdot (8 W_\eps R_\eps)^{L_\eps}
    \leq \bigl(16 C_0^2 \cdot (2/\eps)^{\gamma+\theta}\bigr)^{C_0 \, (\ln(4/\eps))^{\kappa}}
    =:   \Theta_\eps .
  \]
  Combining this with
  \Cref{eq:LipschitzMapCoveringNumberEstimate,eq:CoveringNumberUnionEstimate,eq:NumberOfSubsetsBound}
  and \Cref{lem:FiniteDimensionCoveringBound}, we therefore see
  \begin{align*}
    \Covering_{C(Q_{\CC^d})}(\NNSet_\eps,\eps/2)
    & \leq \sum_{L,\boldN, \J}
             \Covering_{C(Q_{\CC^d})}\bigl(\mathrm{Im}(\Lambda_{\boldN,\J}), \eps/2\bigr) \\
    & \leq \sum_{L,\boldN, \J}
             \Covering
             \bigl(
               [-R_\eps,R_\eps]^{\sum_{\ell=1}^L (2 |J_\ell| + 2 W_\eps)},
               \tfrac{\eps}{2\Theta_\eps}
             \bigr) \\
    & \leq L_\eps
           \cdot W_\eps^{L_\eps}
           \cdot (d^2 e W_\eps)^{W_\eps}
           \cdot \Bigl(R_\eps \frac{2 \Theta_\eps}{\eps}\Bigr)^{8 L_\eps W_\eps} .
  \end{align*}
  It is straightforward to see that
  $\ln(W_\eps) \leq \ln(d^2 e W_\eps) \lesssim \ln(2/\eps)$
  and $\ln(L_\eps) \lesssim \ln(2/\eps)$, as well as $\ln(R_\eps) \lesssim \ln(2/\eps)$
  and $\ln(\Theta_\eps) \lesssim \bigl(\ln(2/\eps)\bigr)^{\kappa+1}$,
  where the implied constants only depend on $d,C_0,\gamma,\kappa,\theta$.
  In view of these estimates and because of $L_\eps, W_\eps \geq 1$,
  the preceding displayed equation shows
  \[
    \ln \big( \Covering_{C(Q_{\CC^d})}(\NNSet_\eps,\eps/2) \big)
    \lesssim L_\eps \cdot W_\eps \cdot \big( \ln(2/\eps) \big)^{\kappa+1}
    \lesssim \big( \ln(2/\eps) \big)^{1 + 2 \kappa} \cdot \eps^{-\gamma} ,
  \]
  proving \Cref{eq:NetworkSetEntropyBound}.

  \medskip{}

  \textbf{Step~2 (Completing the proof):}
  Let $\eps \in (0,e^{-1})$.
  \Cref{eq:NetworkSetEntropyBound} implies that there exists a constant $M_\eps \in \N$
  with $M_\eps \leq \exp \bigl(C_1 \cdot (\ln(2/\eps))^{1+2\kappa} \cdot \eps^{-\gamma}\bigr)$
  and $\NNSet_\eps \subset \bigcup_{m=1}^{M_\eps} \ClosedBall_{\eps/2}(g^{(\eps)}_m)$
  for suitable $g^{(\eps)}_m \in \NNSet_\eps$, where 
  $\ClosedBall_\delta (g) := \{ f \in C(Q_{\CC^d}) \colon \| f - g \|_{L^\infty} \leq \delta \}$
  is the closed ball of radius $\delta$ around $g$.

  Now, for each $m \in \FirstN{M_\eps}$ choose
  $h_m^{(\eps)} \in \CalF_{n,d} \cap \ClosedBall_{\eps}(g_m^{(\eps)})$
  if this intersection is non-empty, and $h_m^{(\eps)} = 0$ otherwise.
  By assumption of the theorem, for each $f \in \CalF_{n,d}$
  there exists $g \in \NNSet_\eps$ satisfying $\| f - g \|_{L^\infty} \leq \eps/2$.
  Then,
  \(
    \| f - g_m^{(\eps)} \|_{L^\infty}
    \leq \frac{\eps}{2} + \| g - g_m^{(\eps)} \|_{L^\infty}
    \leq \eps
  \)
  for a suitable $m \in \FirstN{M_\eps}$, and hence 
  \(
    \| f - h_m^{(\eps)} \|_{L^\infty}
    \leq \| f - g_m^{(\eps)} \|_{L^\infty}
         + \| g_m^{(\eps)} - h_m^{(\eps)} \|_{L^\infty}
    \leq 2 \eps.
  \)

  Overall, this shows $\CalF_{n,d} \subset \bigcup_{m=1}^{M_\eps} \ClosedBall(h_m^{(\eps)}, 2\eps)$
  and hence
  \[
    \ln \big( \Covering(\CalF_{n,d},2\eps) \big)
    \leq \ln(M_\eps)
    \leq C_1 \cdot (\ln(2/\eps))^{1+2\kappa} \cdot \eps^{-\gamma}
    \qquad \forall \, \eps \in (0,e^{-1}).
  \]
  In view of \Cref{lem:FunctionClassCoveringNumbers}, 
  this is only possible if $\gamma \geq 2d/n$, which is what we wanted to show.
\end{proof}

\section{Conclusion}

In the present paper, we studied the problem of approximating functions of regularity
$C^n$ defined on $\CC^d$ using feed-forward complex-valued neural networks (CVNNs)
with modReLU activation function.
We showed that (ignoring logarithmic factors) a suitably constructed
modReLU CVNN with $\mathcal{O}(\varepsilon^{-2d/n})$ parameters (weights)
can achieve uniform approximation error $\eps$.
Moreover, we showed that this rate is near-optimal.
This is as expected, since comparable real-valued neural networks
obtain the same rates, cf.\ \cite{YarotskyReLUBounds} (identifying $\mathbb C \simeq \mathbb R^2$).

Since it is known that ReLU neural networks achieve \emph{optimal} approximation rates for $C^n$ functions
(see \cite{YarotskyReLUBounds,PetersenOptimalApproximation,BoelcskeiSparseNN}), it cannot be expected that
(modReLU) CVNNs \emph{strictly improve} on ReLU networks, even in the complex setting.
Rather, since CVNNs have been empirically observed to outperform real-valued neural networks
in many applications involving complex-valued inputs \cite{LustigComplexNNForMRI,BengioUnitaryEvolutionRNN},
our goal is to initiate the study of the expressivity of CVNNs;
furthermore, our goal was to rigorously prove that modReLU CVNNs can \emph{match}
the approximation capabilities of ReLU neural networks.
Our results confirm that this is indeed the case.

The essential properties of the modReLU on which our proof relies are the following:
\begin{itemize}
  \item modReLU CVNNs of a \emph{constant size} can approximate the function $z \mapsto \Re(z)$
        (and hence also the function $z \mapsto \Im(z)$) arbitrarily well; see \Cref{prop:RealImaginary};

  \item modReLU CVNNs of a \emph{constant size} can approximate the ``complexified'' ReLU function
        $z \mapsto \varrho( \Re(z) ) = \max \{ 0, \Re(z) \}$ arbitrarily well;
        see \Cref{prop:ReLUreal}; and

  \item the modReLU is Lipschitz continuous.
\end{itemize}
Once these properties are known for a given activation function,
the arguments used to prove our main theorem
(which build upon the ideas in \cite{YarotskyReLUBounds}),
can be used to prove an analogous approximation result for that activation function.
The Lipschitz continuity is used to control the propagation of errors among the layers of the
network; it can probably be replaced by Hölder continuity and possibly even by uniform continuity.
The main technical contribution of the paper is thus to verify that the above properties
are satisfied for the modReLU and to show that these properties imply the main approximation result.

\appendix

\section{Postponed technical proofs}%
\label{sec:PostponedProofs}

\subsection{Proof of Lemma~\ref{lem:RealImaginary}}%
\label{sub:RealImaginaryPartProof}

\begin{proof}
  Set $w :=  h z+\frac 1 h$.
  If $\Im (z) = 0$, then $w \in (0,\infty)$ and hence ${\sgn (w) - 1 = 0 = \Im (z)}$,
  so that the first part of \Cref{eq:RealImagApprox} is true.
  Hence, we can assume in what follows that $\Im (z) \neq 0$.
  Now, note by choice of $h$ that $0 < h \leq \frac{1}{2}$ and $h \, |z| \leq \frac{1}{2}$,
  which shows that $|1 + z h^2| \geq 1 - h \cdot h \, |z| \geq \frac{3}{4}$ and therefore also
  $|w| = \frac{1}{h} |1 + h^2 z| \geq \frac{3}{4 h} > 0$.

  As a consequence, we obtain the estimate
  \begin{align*}
    \Big|
      \frac{1}{h^2}
      \frac{1}{|w|}
      - \frac{1}{h}
    \Big|
    & = \frac{1}{h} \cdot
        \Big|
          \frac{1}{h} \frac{1}{|h^{-1} + h z|} - 1
        \Big|
      = \frac{1}{h} \cdot
        \Big|
          \frac{1}{|1 + h^2 z|} - 1
        \Big| \\
    & = \frac{1}{h} \cdot
        \frac{\big|1 - |1 + h^2 z| \big|}{|1 + h^2 z|}
      \leq \frac{1}{h} \frac{h^2 |z|}{3/4}
      = \frac{4}{3} h \, |z|
      \leq \frac{2}{3}
      \leq 1 .
  \end{align*}
  As $\Im (w) = h \, \Im(z) \neq 0$, this implies
  \begin{align*}
    \Big|
      \frac{1}{h^2} \Im (w / |w|)
      - \Im (z)
    \Big|
    & = h
        \cdot |\Im (z)|
        \cdot \Big|
                \frac{1}{h^2} \frac{\Im(w/|w|)}{\Im (w)}
                - \frac{1}{h}
              \Big| \\
    & = h
        \cdot |\Im(z)|
        \cdot \Big|
                \frac{1}{h^2} \frac{1}{|w|}
                - \frac{1}{h}
              \Big|
      \leq h \, |z| .
  \end{align*}

  Next, note
  \(
    \Re(w)
    = \frac{1}{h} \cdot \bigl(1 + h^2 \Re (z)\bigr)
    \geq \frac{1}{h} \cdot \bigl( 1 - h \cdot h \, |z|\bigr)
    \geq \frac{1}{h} \cdot \bigl(1 - \tfrac{1}{2} \cdot \tfrac{1}{2}\bigr)
    =    \frac{3}{4 h}
    >    0 .
  \)
  Hence, $\Re (w) + |w| \geq 2 \Re (w) \geq \frac{3}{2 h}$.
  Since also $|\Im(w)| = h \, |\Im(z)| \leq h \, |z|$, we thus see
  \[
    \big| \Re(w) - |w| \big|
      = \frac{\big| (\Re(w) - |w|) (\Re(w) + |w|) \big|}{\big|\Re(w) + |w| \big|}
      = \frac{\big| (\Re(w))^2 - |w|^2 \big|}{\big|\Re(w) + |w| \big|}
      \leq \frac{|\Im(w)|^2}{\frac{3}{2 h}}
  \]
  and hence $\big| \Re(w) - |w| \big| \leq \frac{2}{3} h^3 \, |z|^2$.
  Together with the estimate $|w| \geq \frac{3}{4 h}$
  from the beginning of the proof, we get
  \[
    \frac{1}{h^2}
    \Big|
      \Re (w / |w|) - 1
    \Big|
    = \frac{1}{h^2}
      \bigg|
        \frac{\Re(w) - |w|}{|w|}
      \bigg|
    \leq \frac{1}{h^2}
         \frac{\frac{2}{3} h^3 |z|^2}{\frac{3}{4 h}}
    =    \frac{4}{3}
         \frac{2}{3}
         h^2 |z|^2
    \leq h^2 |z|^2 .
  \]
  Combining everything, we arrive at
  \begin{align*}
    \Big|
        \Im(z)
        - \frac{-i}{h^2}
          \cdot \Big(
                  \sgn( h z + \tfrac{1}{h} )   - 1
                \Big)
      \Big|
    & = \Big|
          \frac{-i}{h^2}
          \cdot \Big(
                  \frac{w}{|w|} - 1
                \Big)
          - \Im(z)
        \Big| \\
    & \leq \Big|
             \frac{-i}{h^2} \cdot i \cdot \Im\Bigl(\frac{w}{|w|}\Bigr)
             - \Im (z)
           \Big|
           + \Big|
               \frac{-i}{h^2} \cdot \Big( \Re\Big( \frac{w}{|w|} \Big) - 1 \Big)
             \Big| \\
    & \leq \Big|
             \frac{1}{h^2} \Im(w / |w|) - \Im (z)
           \Big|
           + \frac{1}{h^2} \big| \Re(w / |w|) - 1 \big|
           \\
    & \leq h \, |z| + h^2 |z|^2
      \leq 2 h \, |z| ,
  \end{align*}
  proving the first estimate in \Cref{eq:RealImagApprox}.
  To prove the second estimate in \Cref{eq:RealImagApprox},
  simply note that $\Re(z) = \Im (i z)$ and $\sgn(i w) = i \sgn(w)$;
  hence, we get as claimed that
  \begin{align*}
    2 h \, |z|
    & = 2 h \, |i z|
      \geq \Big|
             \Im (i z) - \frac{-i}{h^2} \cdot \big(  \sgn(h i z +  \tfrac{1}{h} )   - 1 \big)
           \Big| \\
    & =    \Big|
             \Re(z) - \frac{-i}{h^2} \cdot \big( i \sgn(h z - \tfrac{i}{h}) + i \, i \big)
           \Big|
      =    \Big|
             \Re (z)
             - \frac{1}{h^2} \cdot \big( \sgn(h z - \tfrac{i}{h}) + i \big)
           \Big| .
  \end{align*}
\end{proof}

\subsection{Composition of neural networks}%
\label{sub:DNNCalculusComposition}

The composition of several neural networks is clearly again represented by a neural network.
In this appendix we make this statement more precise, showing how the size
of the resulting network is related to the size of the ``input'' networks.
We note that the bounds for modReLU networks that we derive here are slightly
worse than those derived for ReLU networks in \cite[Section~2]{\PetersenRef},
owing to the fact that one can easily implement the identity function using the ReLU
while this seems not to be possible (on all of $\CC$) using the modReLU.

But first, we need some additional notation:
Given a network $\Phi = \big( (A_1,b_1),\dots,(A_L,b_L) \big)$, let us write
$W_{\inn}(\Phi) := \| A_1 \|_{\ell^0} + \| b_1 \|_{\ell^0}$
and $W_{\out}(\Phi) := \| A_L \|_{\ell^0} + \| b_L \|_{\ell^0}$
and furthermore $\| \Phi \|_{\inn} := \max \{ \| A_1 \|_\infty, \| b_1 \|_\infty \}$
and $\| \Phi \|_{\out} := \max \{ \| A_L \|_\infty, \| b_L \|_\infty \}$.
Now, assuming that $L \geq 2$ and given a further network
$\Psi = \big( (B_1,c_1),\dots,(B_M,c_M) \big)$ with $\dout(\Psi) = \din(\Phi)$ and $M \geq 2$,
define the composition of $\Phi,\Psi$ as
\begin{equation}
  \Phi \compose \Psi
  := \big(
       (B_1,c_1),
       \dots,
       (B_{M-1},c_{M-1}),
       (A_1 B_M, b_1 + A_1 c_M),
       (A_2,b_2),
       \dots,
       (A_L,b_L)
     \big) .
  \label{eq:NNCompositionDefinition}
\end{equation}
It is straightforward to verify
$R_\sigma (\Phi \compose \Psi) = R_\sigma \Phi \circ R_\sigma \Psi$
and $L(\Phi \compose \Psi) = L(\Phi) + L(\Psi) - 1$
and $B(\Phi \compose \Psi) \leq \max \{ B(\Phi), B(\Psi) \}$.
The next lemma provides further bounds on the size of $\Phi \compose \Psi$.

\begin{lemma}\label{lem:CompositionBound}
  Let $\Phi^{(1)},\dots,\Phi^{(K)}$ be neural networks
  of depth $L(\Phi^{(i)}) \geq 2$ for $i \in \{ 1,\dots,K \}$
  and satisfying $\dout(\Phi^{(i)}) = \din(\Phi^{(i+1)})$
  for $i \in \{ 1,\dots,K-1\}$.
  Then the following hold:
  \begin{enumerate}[leftmargin=0.5cm,topsep=0.2cm]
    \item If $W_{\inn}(\Phi^{(i)}),W_{\out}(\Phi^{(i)}) \leq C$
          for all $i \in \{ 1,\dots,K \}$ and some $C > 0$, then
          \begin{equation}
            W\bigl(\Phi^{(K)} \compose \cdots \compose \Phi^{(1)}\bigr)
            \leq C^2 \cdot (K-1) + \sum_{i=1}^K W(\Phi^{(i)}).
            \label{eq:CompositionWeightBound}
          \end{equation}

    \item If $\| \Phi^{(i)} \| \leq C$ for all $i \in \{ 1,\dots,K \}$ and some $C \geq 1$
          and $d_{\out}(\Phi^{(i)}) \leq D$ for all $i \in \{ 1,\dots,K \}$, then
          \begin{equation}
            \big\| \Phi^{(K)} \compose \cdots \compose \Phi^{(1)} \big\|
            \leq 2 D \cdot C^2 .
            \label{eq:CompositionNormBound}
          \end{equation}

    \item \(
            L(\Phi^{(K)} \compose \cdots \compose \Phi^{(1)})
            \!=\! \big[
                \sum_{i=1}^K
                  L(\Phi^{(i)})
              \big]
              - (K-1)
          \)
          and
          \(
            R_\sigma (\Phi^{(K)} \compose \cdots \compose \Phi^{(1)})
            \!=\! R_\sigma \Phi^{(K)} \circ \cdots \circ R_\sigma \Phi^{(1)} .
          \)
  \end{enumerate}
\end{lemma}

\begin{remark}\label{rem:NNComposition}
  In particular, \Cref{eq:CompositionWeightBound} shows that if $B,W \geq 1$
  and $W(\Phi^{(i)}) \leq W$ as well as $B(\Phi^{(i)}) \leq B$ for all $i \in \{ 1,\dots,K \}$,
  then
  \(
    W(\Phi^{(K)} \compose \cdots \compose \Phi^{(1)})
    \leq K \cdot (B^2 + W) .
  \)
\end{remark}

\begin{proof}
  Before we prove the general case, we analyze the composition of two networks as in
  \Cref{eq:NNCompositionDefinition}.
  First, note for $A \in \CC^{N \times K}$ and $B \in \CC^{K \times P}$ that
  \[
    \| A B \|_{\ell^0}
    = \sum_{i,j}
        \Indicator_{(A B)_{i,j} \neq 0}
    \leq \sum_{i,j,\ell}
           \Indicator_{A_{i,\ell} \neq 0} \Indicator_{B_{\ell,j} \neq 0}
    \leq \sum_{i,\ell}
         \Big(
           \Indicator_{A_{i,\ell} \neq 0}
           \max_t
             \sum_{j}
               \Indicator_{B_{t,j} \neq 0}
         \Big)
    \leq \| A \|_{\ell^0} \| B \|_{\ell^0} .
  \]
  A similar (but easier) calculation shows that $\| A v \|_{\ell^0} \leq \| A \|_{\ell^0}$
  for $v \in \CC^{K}$.
  Based on these estimates, we see (in the notation of \Cref{eq:NNCompositionDefinition}) that
  \[
    \| b_1 + A_1 c_M \|_{\ell^0} + \| A_1 B_M \|_{\ell^0}
    \leq \| b_1 \|_{\ell^0}
         + \| A_1 \|_{\ell^0}
         + \| A_1 \|_{\ell^0} \| B_M \|_{\ell^0}
    \leq W_{\inn}(\Phi) \cdot (1 + W_{\out}(\Psi)) .
  \]
  Directly from the definition of $\Phi \compose \Psi$, we thus see
  \begin{equation}
    \begin{split}
      W(\Phi \compose \Psi) 
      & \leq W(\Psi) - W_{\out}(\Psi)
             + W(\Phi) - W_{\inn} (\Phi)
             + W_{\inn}(\Phi) (1 + W_{\out}(\Psi)) \\
      & \leq W(\Psi) + W(\Phi) + W_{\inn}(\Phi) W_{\out}(\Psi) .
    \end{split}
    \label{eq:CompositionWeightEstimate}
  \end{equation}

  Next, given $A \in \CC^{N \times K}$ and $B \in \CC^{K \times P}$ it is easy to see
  $| (A B)_{i,j} | \leq K \cdot \| A \|_\infty \| B \|_\infty$.
  Based on this, we see in the notation of \Cref{eq:NNCompositionDefinition}
  that $\| A_1 B_M \|_{\infty} \leq \dout(\Psi) \, \| A_1 \|_\infty \| B_M \|_\infty$
  and
  \(
    \| b_1 + A_1 c_M \|_\infty
    \leq \| b_1 \|_{\infty} + \dout(\Psi) \| A_1 \|_{\infty} \| c_M \|_{\infty}
    \leq \| \Phi \|_{\inn} \cdot ( 1 + \dout(\Psi) \| \Psi \|_{\out} ) .
  \)
  Thus, we see directly from the definition of $\Phi \compose \Psi$ that
  \begin{equation}
    \| \Phi \compose \Psi \|
    \leq \max
         \big\{
           \| \Phi \|, \,\,
           \| \Psi \|, \,\,
           \| \Phi \|_{\inn}
           \cdot (
                   1
                   + \dout(\Psi) \, \| \Psi \|_{\out}
                 )
         \big\} .
    \label{eq:CompositionNormEstimate}
  \end{equation}

  Now, we prove \Cref{eq:CompositionWeightBound,eq:CompositionNormBound} by induction on $K \in \N$.
  For $K = 1$ the claim is trivial.
  Next, assume that the claim holds for some $K \in \N$ and set
  $\Psi := \Phi^{(K)} \compose \cdots \compose \Phi^{(1)}$.

  For proving \Cref{eq:CompositionWeightBound}, note
  $W_{\inn}(\Phi^{(K+1)}) \leq C$ and $W_{\out}(\Psi) = W_{\out}(\Phi^{(K)}) \leq C$.
  Therefore, combining \Cref{eq:CompositionWeightEstimate} with the inductive assumption, we see
  \begin{align*}
    W(\Phi^{(K+1)} \compose \Psi)
    & \leq W(\Phi^{(K+1)})
           + W(\Psi)
           + W_{\inn}(\Phi^{(K+1)}) W_{\out}(\Psi) \\
    & \leq W(\Phi^{(K+1)})
           + C^2 \cdot (K-1)
           + \sum_{i=1}^K
               W(\Phi^{(i)})
           + C \cdot C \\
    & =    C^2 \cdot ( (K+1) - 1) + \sum_{i=1}^{K+1} W(\Phi^{(i)}) ,
  \end{align*}
  completing the induction for \Cref{eq:CompositionWeightBound}.

  To prove \Cref{eq:CompositionNormBound}, note
  $\| \Psi \|_{\out} = \| \Phi^{(K)} \|_{\out} \leq C$
  and $\dout(\Psi) \!= \dout(\Phi^{(K)}) \leq D$
  and use \Cref{eq:CompositionNormEstimate} and the inductive assumption to obtain
  \[
    \| \Phi^{(K+1)} \compose \Psi \|
    \leq \max
         \big\{
           \| \Phi^{(K+1)} \|, \,\,
           \| \Psi \|, \,\,
           \| \Phi^{(K+1)} \|_{\inn}
           \cdot (
                   1
                   + \dout(\Psi) \, \| \Psi \|_{\out}
                 )
         \big\}
    \leq 2 D \cdot C^2 ,
  \]
  completing the induction for \Cref{eq:CompositionNormBound}.

  The last part of the lemma follows by induction
  after noting that $L(\Phi \compose \Psi) = L(\Phi) + L(\Psi) - 1$
  and $R_\sigma (\Phi \compose \Psi) = R_\sigma \Phi \circ R_\sigma \Psi$.
\end{proof}

\subsection{Linear combinations of neural networks}%
\label{sub:DNNCalculusAddition}

In this appendix we show that the linear combinations of neural networks \emph{of a common depth}
can again be implemented as a neural network.
Indeed, let $d,K \in \N$, and for each $j \in \{ 1,\dots,K \}$ let $a_j \in \CC$ and
let $\Phi^{(j)} = \big( (A_1^{(j)},b_1^{(j)}),\dots,(A_L^{(j)}, b_L^{(j)}) \big)$
be a neural network with $\din(\Phi^{(j)}) = d$ and $\dout(\Phi^{(j)}) = 1$
and of common depth $L(\Phi^{(j)}) = L$.
Define $\Psi := \big( (A_1,b_1),\dots,(A_L,b_L) \big)$, where
\[
  A_1 := \begin{pmatrix}
           A_1^{(1)} \\
           \vdots \\
           A_1^{(K)}
         \end{pmatrix} ,
  \qquad
  A_L := \big( a_1 A_L^{(1)} \,\big|\, \cdots \,\big|\, a_K A_L^{(K)} \big) ,
  \qquad \text{and} \qquad
  b_\ell := \begin{pmatrix}
              b_\ell^{(1)} \\
              \vdots \\
              b_\ell^{(K)}
            \end{pmatrix}
\]
for $\ell \in \{ 1,\dots,L-1 \}$, as well as $A_\ell := \mathrm{diag}(A_\ell^{(1)},\dots,A_\ell^{(K)})$
for $\ell \in \{ 2,\dots,L-1 \}$ and $b_L := \sum_{j=1}^K a_j \, b_L^{(j)}$.
It is easy to verify that
\begin{equation}
  \begin{split}
    & R_\sigma \Psi = \sum_{j=1}^K a_j \, R_\sigma \Phi^{(j)},
    \qquad
    L(\Psi) = L,
    \qquad
    W(\Psi) \leq \sum_{j=1}^K W(\Phi^{(j)}), \\
    & B(\Psi) \leq \sum_{j=1}^K B(\Phi^{(j)}),
    \qquad \text{and} \qquad
    \| \Psi \| \leq \sum_{j=1}^K (1 + |a_j|) \big\| \Phi^{(j)} \big\| .
  \end{split}
  \label{eq:LinearCombinationComplexityBound}
\end{equation}
Indeed, all except the first and final of these statements follow directly from the definitions.
To verify the final statement, note by definition of $\| \bullet \|_{\infty}$ that
\begin{align*}
  & \| A_1 \|_{\infty}
    = \max_{j \in \{ 1,\dots,K \}}
        \big\| A_1^{(j)} \big\|_{\infty}
    \leq \max_{j \in \{ 1,\dots,K \}}
           \big\| \Phi^{(j)} \big\| \\
  \text{and} \quad
  & \| b_\ell \|_{\infty}
    = \max_{j \in \{ 1,\dots,K \}}
        \big\| b_\ell^{(j)} \big\|_{\infty}
    \leq \max_{j \in \{ 1,\dots,K \}}
           \big\| \Phi^{(j)} \big\|
    \quad \text{for} \quad
    \ell \in \{ 1,\dots,L-1 \}, \\
  \text{as well as} \quad
  & \| A_L \|_{\infty}
    \leq \max_{j \in \{ 1,\dots,K \}}
           |a_j| \, \big\| A_L^{(j)} \big\|_{\infty}
    \leq \max_{j \in \{ 1,\dots,K \}}
           |a_j| \, \big\| \Phi^{(j)} \big\| \\
    \text{and} \quad
  & \| b_L \|_{\infty}
    \leq \sum_{j=1}^K
           |a_j| \, \| b_L^{(j)} \|
    \leq \sum_{j=1}^K
           |a_j| \, \big\| \Phi^{(j)} \big\|
  ,
\end{align*}
which implies as claimed that $\| \Psi \| \leq \sum_{j=1}^K (1 + |a_j|) \, \| \Phi^{(j)} \|$.

Finally, to verify the first statement, an induction with respect to $\ell \in \{ 1,\dots,L-1 \}$
shows that if we set $T_\ell^{(j)} := A_\ell^{(j)} (\bullet) + b_\ell^{(j)}$
and $T_{\ell} := A_\ell (\bullet) + b_\ell$ and finally
$F_\ell^{(j)} := (\sigma \circ T_\ell^{(j)}) \circ \cdots \circ (\sigma \circ T_1^{(j)})$
and $F_\ell := (\sigma \circ T_\ell) \circ \cdots \circ (\sigma \circ T_1)$, then
$F_\ell (z) = \bigl(F_\ell^{(1)}(z), \dots, F_\ell^{(K)}(z)\bigr)$ for $z \in \CC^d$
and $\ell \in \{ 1,\dots,L-1 \}$.
Based on this, the first statement in \Cref{eq:LinearCombinationComplexityBound} follows from
the definition of the realization map $R_\sigma$ (see \Cref{sub:CVNNs}).

\subsection{Proof of Lemma~\ref{lem:LipschitzLemma}}%
\label{sub:TechnicalMultiplicationResultProof}

\begin{proof}
  Define $\theta_j (z) := \prod_{\ell=1}^j \alpha_\ell (z)$
  and $\kappa_j := \eps \sum_{\ell=1}^j (1 + \eps)^\ell$.
  We will show inductively that $|\gamma_j (z) - \theta_j(z)| \leq \kappa_j$.
  This will imply the claim by taking $j = M$, since we have
  \[(1+\eps)^\ell \leq (1+\eps)^{M+1} \leq \bigl(1 + \tfrac{1}{M+1}\bigr)^{M+1} \leq e \leq 3\]
  and hence $\kappa_j \leq \kappa_M \leq 3M \, \eps$.

  The case $j = 1$ is trivial, since
  \(
    |\gamma_1 (z) - \theta_1 (z)|
    = |\beta_1(z) - \alpha_1(z)|
    \leq \delta
    \leq \eps^2
    \leq \eps
    \leq \kappa_1
    .
  \)
  For the induction step, first note that
  \begin{align*}
    \kappa_j
    \leq \kappa_M
    & = \eps (1 + \eps) \sum_{\ell=0}^{M-1} (1 + \eps)^\ell
      = \eps (1 + \eps) \frac{(1+\eps)^M - 1}{(1+\eps) - 1} \\
    & \leq (1 + \eps)^{M+1}
      \leq \Bigl(1 + \tfrac{1}{M+1}\Bigr)^{M+1}
      \leq e
      \leq 3
  \end{align*}
  and hence $|\gamma_j(z)| \leq |\theta_j(z)| + \kappa_j \leq 4$, since $|\alpha_\ell (z)| \leq 1$
  for all $\ell$, and thus $|\theta_j(z)| \leq 1$.
  Since also $|\beta_{j+1}(z)| \leq \delta + |\alpha_{j+1}(z)| \leq 1 + \delta \leq 4$,
  we see by the properties of $\approxMult$ for any $z \in \Omega$ that
  \begin{align*}
    \big| \gamma_{j+1}(z) - \theta_{j+1}(z) \big|
    & \leq \Bigl|
             \approxMult\bigl(\beta_{j+1}(z), \gamma_j(z)\bigr) - \beta_{j+1}(z) \gamma_j(z)
           \Bigr| \\
    & \quad + \bigl|\beta_{j+1}(z) \gamma_j(z) - \beta_{j+1}(z) \theta_j (z)\bigr| \\
    & \quad + \bigl|\bigl(\beta_{j+1} (z) - \alpha_{j+1}(z)\bigr) \theta_j (z)\bigr| \\
    & \leq \eps + |\beta_{j+1}(z)| \cdot \kappa_j + \delta \cdot |\theta_j(z)| \\
    & \leq \eps + \delta + (1 + \delta) \kappa_j
      \leq \eps (1 + \eps) + (1 + \eps) \kappa_j ,
  \end{align*}
  where the last step used that $\delta \leq \eps^2 \leq \eps$.
  Finally, note by choice of $\kappa_j$ that
  \[
    \eps (1 + \eps) + (1 + \eps) \kappa_j
    = \eps (1 + \eps) + \eps \sum_{\ell=2}^{j+1} (1 + \eps)^\ell
    = \eps \sum_{\ell=1}^{j+1} (1 + \eps)^\ell
    = \kappa_{j+1} .
  \]
  This completes the induction and thus the proof.
\end{proof}

\subsection{Proof of Lemma~\ref{lem:RealizationLipschitzBound}}%
\label{sub:RealizationLipschitzBoundProof}

\begin{proof}
  Set $\sigma_\ell := \sigma$ for $\ell \in \{ 1,\dots,L-1 \}$
  and $\sigma_L := \mathrm{id}_{\CC}$.
  It is easy to see in each case that $\sigma_\ell (0) = 0$;
  furthermore, \Cref{lem:SigmaLipschitz} implies that each $\sigma_\ell$
  is $1$-Lipschitz.
  Now, inductively define $w_0 := v_0 := z$ as well as
  $w_{\ell+1} := \sigma_{\ell+1}(A_{\ell+1} w_\ell + b_{\ell+1})$
  and $v_{\ell+1} := \sigma_{\ell+1}(B_{\ell+1} v_\ell + c_{\ell+1})$
  for $\ell \in \{ 0,\dots,L-1 \}$.
  We then have $R_\sigma \Phi(z) = w_L$ and $R_\sigma \Psi(z) = v_L$.
  We will show inductively that
  $\| v_\ell \|_{\ell^\infty} \leq R_0 \cdot (2 R)^\ell \cdot N_1 \cdots N_{\ell-1}$
  and
  \(
    \| v_\ell - w_\ell \|_{\ell^\infty}
    \leq R_0 \cdot N_1 \cdots N_{\ell-1} \cdot 4^\ell R^{\ell-1} \cdot \| \Phi - \Psi \|
    ,
  \)
  which then implies the claim of the lemma.

  For $\ell=0$, we trivially have
  \(
    \| v_0 \|_{\ell^\infty}
    = \| z \|_{\ell^\infty}
    \leq R_0
    = R_0 \cdot (2 R)^\ell \cdot N_1 \cdots N_{\ell-1}
  \)
  and furthermore
  \(
    \| v_0 - w_0 \|_{\ell^\infty}
    = 0
    \leq R_0 \cdot N_1 \cdots N_{\ell-1} \cdot 4^\ell R^{\ell-1} \cdot \| \Phi - \Psi \|
    .
  \)

  Next, if the claimed estimates hold for some $\ell \in \{ 0,\dots, L-1\}$, we see
  \begin{align*}
    \big| (v_{\ell+1})_j \big|
    & = \big|
          \sigma_{\ell+1} \big( (B_{\ell+1} v_\ell + c_{\ell+1} )_j \big)
        \big|
      \leq \big|
             (B_{\ell+1} v_\ell + c_{\ell+1} )_j
           \big| \\
    & \leq |(c_{\ell+1})_j|
           + \sum_{m=1}^{N_\ell}
               |(B_{\ell+1})_{j,m}| \, |(v_\ell)_m|
      \leq R + N_\ell \, R \, R_0 \cdot (2 R)^\ell \cdot N_1 \cdots N_{\ell-1} \\
    & \leq R_0 \cdot (2 R)^{\ell+1} \cdot N_1 \cdots N_{\ell}
           \cdot \big(
                   \tfrac{1}{2} \tfrac{1}{R_0 \, (2 R)^\ell \, N_1 \cdots N_\ell}
                   + \tfrac{1}{2}
                 \big)
      \leq R_0 \cdot (2 R)^{\ell+1} \cdot N_1 \cdots N_{\ell} ,
  \end{align*}
  proving the first estimate for $\ell+1$ instead of $\ell$.
  In a similar way, we see
  \begin{equation}
    \begin{split}
      \big| (w_{\ell+1})_j - (v_{\ell+1})_j \big|
      & = \big|
            \sigma_{\ell+1} \big( (A_{\ell+1} w_\ell + b_{\ell+1})_j \big)
            - \sigma_{\ell+1} \big( (B_{\ell+1} v_\ell + c_{\ell+1})_j \big)
          \big| \\
      & \leq \bigl|(A_{\ell+1} \, w_\ell - B_{\ell+1} \, v_\ell)_j\bigr|
             + \| b_{\ell+1} - c_{\ell+1} \|_{\ell^\infty} .
    \end{split}
    \label{eq:RealizationLipschitzBoundStep1}
  \end{equation}
  Next, note that
  \begin{align*}
    \bigl|(A_{\ell+1} \, w_\ell - B_{\ell+1} \, v_\ell)_j\bigr|
    & \leq \sum_{m=1}^{N_\ell}
           \Big[
             \bigl|(A_{\ell+1})_{j,m} \big( (w_\ell)_m - (v_\ell)_m \big) \bigr|
             + \bigl|\big( (A_{\ell+1})_{j,m} - (B_{\ell+1})_{j,m}\big) (v_\ell)_m \bigr|
           \Big] \\
    & \leq N_\ell
           \cdot \Big(
                   R \cdot \| w_\ell - v_\ell \|_{\ell^\infty}
                   + \| \Phi - \Psi \| \cdot \| v_\ell \|_{\ell^\infty}
                 \Big) \\
    & \overset{(\ast)}{\leq}
           R_0 \cdot N_1 \cdots N_\ell \cdot 4^{\ell+1} R^\ell \cdot \| \Phi - \Psi \|
           \cdot \big(
                   \tfrac{1}{4}
                   + \tfrac{2^\ell}{4^{\ell+1}}
                 \big) \\
    & \leq \frac{1}{2} \cdot R_0 \cdot N_1 \cdots N_\ell \cdot 4^{\ell+1} R^\ell \cdot \| \Phi - \Psi \| ,
  \end{align*}
  where the step marked with $(\ast)$ used the induction hypothesis.
  Combining this estimate with \Cref{eq:RealizationLipschitzBoundStep1} and noting
  \(
    \| b_{\ell+1} - c_{\ell+1} \|_{\ell^\infty}
    \leq \| \Phi - \Psi \|
    \leq \frac{1}{2}
         \cdot R_0
         \cdot N_1 \cdots N_\ell
         \cdot 4^{\ell+1} R^\ell
         \cdot \| \Phi - \Psi \|
  \)
  completes the induction.
\end{proof}

\bibliographystyle{siamplain}

\bibliography{references}

\end{document}